\newtheorem{thm}{Theorem}
\newtheorem{prop}{Proposition}
\newtheorem{cor}{Corollary}
\newtheorem{lem}{Lemma}
\newcommand{\bt}{\begin{thm}}
\newcommand{\et}{\end{thm}}
\newcommand{\bl}{\begin{lem}}
\newcommand{\el}{\end{lem}}
\newcommand{\bp}{\begin{prop}}
\newcommand{\ep}{\end{prop}}
\newcommand{\bc}{\begin{cor}}
\newcommand{\ec}{\end{cor}}
\newcommand{\biindice}[3]%
{

\begin{array}[t]{c}
#1\\
{\scriptstyle #2}\\
{\scriptstyle #3}
\end{array}

}
\begin{document}
\baselineskip7mm
\title{\large \textbf{Structure of a factor ring $R/P$ in terms of differential identities involving  a new kind of  involution}}
 \author[1]{ Karim Bouchannafa \thanks{Corresponding author: bouchannafa.k@gmail.com}}
 \author[1]{ Lahcen Oukhtite}
 \author[1]{ Mohammed Zerra}

\affil[1]{\small Department of Mathematics, Faculty of Sciences and Technology, University S. M. Ben Abdellah, Fez, Morocco}
\affil[ ]{\small bouchannafa.k@gmail.com,  mohamed.zerra@gmail.com and, oukhtitel@hotmail.com }

\date{ }
\maketitle

\maketitle
\abstract {Let $R$ be a ring and $P$ a prime ideal of $R.$ In this paper, we establish some commutativity criteria for the factor ring $R/P$ in terms of derivations of $R$ satisfying some algebraic identities involving  a new kind of involution in relation with $P.$}
\\
\vskip 10pt
\emph{Key words:} {\small Prime ideal, factor ring, derivation, commutativity, involution.}

\vskip 10pt
\emph{2010 Mathematics Subject Classification:} 16N60; 46J10; 16W25; 16N20.

\section*{1. Introduction}
\hspace{4 ex} All along this article, $R$ will  represent an associative ring with center $Z(R)$. For any $a,b\in R$, the symbol $[a,b]$ will denote the commutator $ab - ba$, while the symbol $a\circ b$ will stand for the anti-commutator $ab+ba$. Recall that a proper ideal $P$ of $R$ is said to be prime if for any $a,b\in R$, $aRb\subseteq P$ implies that $a\in P$ or $b\in P$. Furthermore, $R$ is called a prime ring if and only if $(0)$ is the prime ideal of $R$. The ring $R$ is called a semi-prime if for any $a\in R$, $aRa = (0)$ implies that $a = 0$, and $R$ is said to be 2-torsion free if $2a = 0$ implies $a = 0$, for each $a \in R$.\\
\hspace*{4 ex} An involution of $R$ is an anti-automorphism of order $2$, that is an additive mapping $*:R\rightarrow R$ such that $(a^*)^* = a$ and $(ab)^* = b^*a^*$, for any $a,b\in  R$. A ring equipped with an involution is known as a $*$-ring. An element $a$ in a $*$-ring is said to be hermitian if $a^*= a$ and skew-hermitian if $a^*=-a$. The sets of all hermitian and skew-hermitian elements of $R$ will be denoted by $H(R)$ and $S(R)$ respectively, the involution is said to be of the first kind if $Z(R)\subseteq H(R)$, otherwise it is said to be of the second kind. In the latter, case $Z(R)\cap S(R)\neq \{0\}$.\\
\hspace*{4 ex} An additive map $d : R \rightarrow R$ is called a derivation on $R$ if $d(xy) = d(x)y + xd(y)$ for all $x,y\in R$, let $a\in R$ be a fixed element, the map $d : R \rightarrow R$ defined by $d(x) =[a, x]$, for all $x\in R$, is a derivation on $R$ called the inner derivation induced by $a$. Many authors have established the commutativity of prime and semi-prime rings admitting suitably constrained additive mappings such as derivations, generalized derivations and endomorphisms acting on appropriate subsets of the rings (see \cite{AD,ADA,MNO,ZERRA3,HMO} for details). The most famous result in this area is due to Herstein \cite{Herst} who proved that if a prime ring $R$ of characteristic different from two admits a nonzero derivation $d$ such that $d(x)d(y) = d(y)d(x)$ for all $x,y \in R$, then $R$ is commutative. This remarkable theorem of Herstein has been influential and it has played a key role in the development of various notions. Motivated by this result, Bell and Daif \cite{BD} obtained the same result by considering the identity $d[x,y] = 0$ for all $x$ and $y$ in a nonzero ideal of $R$. Later, Ali et al. \cite{ADA} showed that if a prime ring $R$ with involution $*$ of a characteristic different from $2$ admits a nonzero derivation $d$ such that $d(xx^*)=d(x^*x)$ for all $x\in R$ and $Z(R)\cap S(R)\neq \{0\}$, then $R$ is commutative. In 2013, Oukhtite et al. \cite{OMA} studied the following identities for $*$-prime ring $R$: $(i)\,d[x,y]=0$, $(ii)\, d[x,y]\pm[x,y]\in Z(R)$, $(iii)\,d(x\circ y)=0$ and $(iv)\,d(x\circ y)\pm x\circ y\in Z(R)$ for all $x,y\in J$, where $J$ is a nonzero Jordan ideal of $R$.

The present paper is motivated by the previous results and we here continue this line of investigation by considering a generalization to an arbitrary ring rather than a prime ring. More precisely, we will establish a relationship between the structure of a factor ring  $R/P$  and the behavior of its derivations satisfying some identities with central values.

\section*{2. Main results}

Let $R$ be any ring with involution $*$ and $P$ is a prime ideal of $R$. Motivated by various results in the literature, our purpose is to study a pair of derivations $(d_{1},d_{2})$ satisfying any one of the following properties:\\

\noindent{\rm (1)} $ \overline{[d_1(x),x^*]\pm [x,d_2(x^*)]}\in Z(R/P)\;\;\mbox{for all}\;\;x\in R$,\\
{\rm (2)} $\overline{d_1(x)\circ x^*\pm x\circ d_2(x^*)}\in Z(R/P)\;\;\mbox{for all}\;\;x\in R$,\\
{\rm (3)} $\overline{d_1(x)d_2(x^*)}\in Z(R/P)\;\;\mbox{for all}\;\; x\in R$,\\
{\rm (4)} $\overline{d_1(x)d_2(x^*)\pm [x,x^*]}\in Z(R/P)$ for all $x\in R$,\\
{\rm (5)} $\overline{d_1(x)d_2(x^*)\pm x\circ x^*}\in Z(R/P)\;\;\mbox{for all}\;\;x\in R$,\\
{\rm (6)} $\overline{d_1(x^*x)\pm d_2(xx^*)\pm xx^*}\in Z(R/P)\;\;\mbox{for all }\;\;x\in R$.\\

\noindent More precisely, we will discuss the existence of such mappings and their relationship with commutativity of the factor ring $R/P$. We begin with the following notions.\\

\noindent \textbf{Definition 1.} Let $R$ be a ring with involution $*$ and $I$ an ideal of $R$. The involution $*$ is said to be $I$-first kind on $R$ if $Z(R)\cap S(R)\subseteq I$. Otherwise, it is said to be of the $I$-second kind.\\

\noindent
We begin our discussions with the following observations:\\

\noindent
{\bf Remark 1. }\\
(1) A $(0)$-first kind involution is of the first kind. The converse is true if the ring is 2-torsion free. \\
(2) Let $I$ be an ideal of a ring $R$ of a 2-torsion free. An involution of the first kind over $R$ is $I$-first kind.\\

\noindent
The following counter-example  shows that the converse of (2) is not necessary true.\\

\noindent
{\bf Counter-example 1.}\\ Let us consider the ring $R=\left\{\begin{pmatrix}
0 & 0 & 0\\
z_{1} & 0 & 0\\
z_{2} & z_{1} & 0
\end{pmatrix}\;\bigg|\; z_{1},z_{2}\in \mathbb{C}\right\}$ provided with the involution of the second kind $\ast$ defined by
$
\left(
\begin{array}{ccc}
0 & 0 & 0\\
z_{1} & 0 & 0\\
z_{2} & z_{1} & 0
\end{array}
\right)^{\ast}= \left(
\begin{array}{ccc}
0 & 0 & 0\\
\overline{z_{1}} & 0 & 0\\
\overline{z_{2}} & \overline{z_{1}} & 0
\end{array}
\right).$ \\
It is clear that $Z(R) \cap S(R)=\left\{\begin{pmatrix}
0 & 0 & 0\\
0 & 0 & 0\\
ir & 0 & 0
\end{pmatrix}\;\bigg|\; r\in \mathbb{R}\right\}.$ Furthermore, if we set\\
 $I=\left\{\begin{pmatrix}
0 & 0 & 0\\
0 & 0 & 0\\
z & 0 & 0
\end{pmatrix}\;\bigg|\; z\in \mathbb{C}\right\},$ then $I$ is an ideal of $R$ with
$Z(R)\cap S(R)\subseteq I$. Accordingly, $\ast$ is $I$-first kind.\\

\noindent
The following lemma is essential for developing the proofs of our results.

\begin{lem} {\rm (\cite{SA}, Lemma 2.2)} \label{lem}
Let $(R,*)$ be a ring with involution, $P$ a prime ideal of $R$ such that $char(R/P)\neq 2$ and  $Z(R)\cap S(R)\nsubseteq P$. If $[x,x^*]\in P$ for all $x\in R$, then $R/P$ is an integral domain.
\end{lem}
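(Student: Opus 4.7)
The plan is to show that $R/P$ is commutative; since $P$ is prime (so $R/P$ is a prime ring, in particular has no nonzero nilpotent ideals), commutativity immediately upgrades to being an integral domain (if $\bar a\bar b = 0$ in $R/P$, commutativity yields $aRb\subseteq P$, and primality forces $\bar a = 0$ or $\bar b = 0$).

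The first step is a standard linearization. Replacing $x$ by $x+y$ in $[x,x^*]\in P$ and cancelling the two diagonal terms gives
\begin{equation*}
[x,y^*]+[y,x^*]\in P\qquad\text{for all }x,y\in R.\tag{$\ast$}
\end{equation*}
Now I would exploit the hypothesis $Z(R)\cap S(R)\nsubseteq P$: fix $z\in Z(R)\cap S(R)$ with $z\notin P$, so that $z^*=-z$ and $z$ is central. Substituting $y$ by $yz$ in $(\ast)$ and using $(yz)^*=z^*y^*=-zy^*$ together with the centrality of $z$ yields
\begin{equation*}
-z[x,y^*]+z[yz,x^*]\cdot z^{-1}\cdots
\end{equation*}
more precisely, one gets $-z[x,y^*]+z[y,x^*]\in P$, i.e.\ $z\bigl([y,x^*]-[x,y^*]\bigr)\in P$. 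Because $z$ is central and lies outside the prime ideal $P$, the observation that $z\cdot c\in P$ forces $zRc\subseteq P$ and hence $c\in P$ gives
\begin{equation*}
[y,x^*]-[x,y^*]\in P\qquad\text{for all }x,y\in R.\tag{$\ast\ast$}
\end{equation*}

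Adding $(\ast)$ and $(\ast\ast)$ produces $2[y,x^*]\in P$, and since $\mathrm{char}(R/P)\neq 2$ we conclude $[y,x^*]\in P$ for all $x,y\in R$. As the involution $*$ is surjective on $R$, $x^*$ ranges over all of $R$, so $[y,x]\in P$ for all $x,y\in R$; that is, $R/P$ is commutative, and hence an integral domain by the opening paragraph.

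The only point that requires any care is the passage from $zc\in P$ to $c\in P$ for $z\in Z(R)\setminus P$: one checks $zRc=Rzc\subseteq P$ using centrality, then invokes primality of $P$. Everything else is bookkeeping of $*$, centrality and the $2$-torsion hypothesis on $R/P$, so I do not anticipate a genuine obstacle beyond choosing the right substitution in $(\ast)$.
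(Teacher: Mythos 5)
Your proof is correct: the linearization, the substitution $y\mapsto yz$ with $z\in Z(R)\cap S(R)\setminus P$, the cancellation of the central non-$P$ factor via primeness, and the use of $char(R/P)\neq 2$ all go through, and commutativity of the prime ring $R/P$ does yield an integral domain. The paper itself does not prove this lemma but only cites it from \cite{SA}, and your argument is exactly the standard one used there; the only blemish is the garbled intermediate display ``$-z[x,y^*]+z[yz,x^*]\cdot z^{-1}\cdots$'', which you immediately correct in the next line.
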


In \cite{AD}, it is proved that a $2$-torsion free prime ring with involution $(R,*)$ is necessary commutative, if it admits a nonzero derivation $d$ such that $d([x,x^{*}])=0$ for all $x \in R$, where $S(R) \cap Z(R)\neq \{0\}$. Motivated by this result we investigate a more general context of differential identities involving two derivations by omitting the primeness assumption imposed on the ring.


\begin{thm}\label{thm1}
Let $(R,\ast)$ be a ring with involution of $P$-second kind where $P$ is a prime ideal of $R$ such that $char(R/P)\neq 2$. If $R$ admits two derivations $d_1$ and $d_2$ such that $$
\overline{[d_1(x),x^*]+[x,d_2(x^*)]}\in Z(R/P)\;\;\mbox{for all}\;\;x\in R,
$$
then  $R/P$ is an integral domain or \big($d_1(R)\subseteq P\;$ and $\;d_2(R)\subseteq P$\big).

\end{thm}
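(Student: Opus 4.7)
The plan is to isolate the behaviour of $d_1,d_2$ at a central skew-hermitian element $z_0\in Z(R)\cap S(R)\setminus P$ and then to split into two cases: either we push the hypothesis to $R/P$-commutativity (so an integral domain, with Lemma \ref{lem} in reserve), or we force both derivations into $P$. First I would linearise: replacing $x$ by $x+y$ in the hypothesis and subtracting $A(x)+A(y)$ (with $A(x)=[d_1(x),x^*]+[x,d_2(x^*)]$) leaves the bilinear relation
\[
g(x,y):=[d_1(x),y^*]+[d_1(y),x^*]+[x,d_2(y^*)]+[y,d_2(x^*)]\in Z(R/P).
\]
Setting $y=z_0$ and using $z_0^*=-z_0$ together with the centrality of $z_0$, four of the eight brackets vanish and only $[d_1(z_0),x^*]-[x,d_2(z_0)]\in Z(R/P)$ survives. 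Substituting $x\mapsto xz_0$ in this relation and cancelling the central non-zero factor $\overline{z_0}$ by primeness of $R/P$ produces the companion $[d_1(z_0),x^*]+[x,d_2(z_0)]\in Z(R/P)$; adding and subtracting (using $\operatorname{char}R/P\neq 2$) yields that $[d_1(z_0),x^*]$ and $[x,d_2(z_0)]$ are individually central in $R/P$ for every $x$. Since $*$ is bijective on $R$, both $[\overline{d_1(z_0)},R/P]$ and $[\overline{d_2(z_0)},R/P]$ lie in $Z(R/P)$, and the classical fact ``in a prime ring of characteristic $\neq 2$, $[a,R]\subseteq Z$ forces $a\in Z$'' yields $\overline{d_1(z_0)},\overline{d_2(z_0)}\in Z(R/P)$.

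Next I would substitute $x\mapsto z_0x$ in $A(x)$: expanding via the Leibniz rule, using $(z_0x)^*=-z_0x^*$, and absorbing the central $z_0^2A(x)$, cancellation of $\overline{z_0}$ gives $[d_1(z_0)x,x^*]+[x,d_2(z_0)x^*]\in Z(R/P)$. Reducing mod $P$ and factoring the now-central $\overline{d_i(z_0)}$ out of the brackets collapses this to
\[
\bigl(\overline{d_1(z_0)}+\overline{d_2(z_0)}\bigr)\,[\bar x,\overline{x^*}]\in Z(R/P),\qquad x\in R.
\]
A parallel calculation starting from $g(x,z_0t)$ and specialising to $t=x$ produces the minus-sign companion $(\overline{d_1(z_0)}-\overline{d_2(z_0)})[\bar x,\overline{x^*}]\in Z(R/P)$; combining the two (char $\neq 2$) gives $\overline{d_1(z_0)}\,[\bar x,\overline{x^*}]\in Z(R/P)$ and $\overline{d_2(z_0)}\,[\bar x,\overline{x^*}]\in Z(R/P)$ individually.

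Now I would split into cases. If at least one of $\overline{d_1(z_0)},\overline{d_2(z_0)}$ is non-zero in $R/P$, cancelling this non-zero central factor by primeness gives $[\bar x,\overline{x^*}]\in Z(R/P)$ for every $x$; linearising and exploiting $\overline{z_0}$ once more forces $[\bar y,\overline{x^*}]\in Z(R/P)$ for all $x,y\in R$, hence $\overline{x^*}\in Z(R/P)$ by the same classical fact. Since $*$ is bijective, every element of $R/P$ is central, so $R/P$ is commutative and, being prime, an integral domain (one may alternatively note $[x,x^*]\in P$ and invoke Lemma \ref{lem}). In the complementary case $d_1(z_0),d_2(z_0)\in P$, the ``parasitic'' commutators in the computation of the previous paragraph vanish mod $P$, and the relation from $g(x,z_0t)$ sharpens to the genuinely bilinear identity $[d_1(x),t^*]+[x,d_2(t^*)]\in Z(R/P)$ for all $x,t\in R$. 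Substituting $t\mapsto ts$ and expanding via Leibniz exposes terms $d_1(x)[s,t^*]$, $[x,t^*]d_2(s)$ inside a central element of $R/P$; iterating and invoking primeness of $R/P$ then forces $d_1(R),d_2(R)\subseteq P$, unless the process instead produces $[\bar x,\overline{x^*}]\in Z(R/P)$, in which case we are back in the previous alternative.

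The hard part will be finishing this complementary case cleanly: the identity $[d_1(x),t^*]+[x,d_2(t^*)]\in Z(R/P)$ on its own admits many formal solutions (any pair of derivations mapping into $P$ works trivially), so the conclusion $d_i(R)\subseteq P$ must be extracted by exploiting the Leibniz rule together with primeness of $R/P$ and the additional datum $d_i(z_0)\in P$. The delicate point is that each Leibniz expansion produces products of a central element with a potentially non-central commutator; by primeness such a product can only lie in $P$ when one of the factors does, and one has to argue contrapositively---supposing some $d_i(a)\notin P$ and pushing the relations to show that $R/P$ is commutative---to avoid being trapped by trivial solutions of the strengthened identity.
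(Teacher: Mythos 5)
Your proposal reaches, in its ``complementary case'', exactly the identity on which the paper's whole proof pivots, namely
\[
\overline{[d_1(x),y]+[x,d_2(y)]}\in Z(R/P)\quad\mbox{for all }x,y\in R,
\]
but it does not prove the implication from this identity to the dichotomy ``$R/P$ is an integral domain or $d_1(R),d_2(R)\subseteq P$''. That is the genuine gap. Your closing paragraph is candid about this: the plan is to ``substitute $t\mapsto ts$, expand via Leibniz, iterate and invoke primeness'', but no concrete chain of substitutions is exhibited, and your own observation that the identity admits the trivial solutions $d_i(R)\subseteq P$ shows why a one-line primeness argument cannot work --- one needs a Posner-type theorem at the level of prime ideals. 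The paper does not prove this step either; it gets to the same bilinear identity (unconditionally, via the substitutions $y\mapsto yh$ with $h\in H(R)\cap Z(R)$, then $h=k^2$ and $y\mapsto yk$ with $k\in S(R)\cap Z(R)$) and then invokes an external result, Theorem 2.6 of \cite{Hajar}, to conclude. So to complete your argument you must either cite such a result or supply its proof; as written, the proof is incomplete precisely at the step that carries all the difficulty.

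The parts you did carry out are sound and take a mildly different route from the paper's. Fixing $z_0\in Z(R)\cap S(R)\setminus P$, evaluating the linearized relation at $y=z_0$ and at $x\mapsto xz_0$ to show $\overline{d_i(z_0)}\in Z(R/P)$, and then extracting $(\overline{d_1(z_0)}\pm\overline{d_2(z_0)})\,\overline{[x,x^*]}\in Z(R/P)$ from $A(z_0x)$ and $g(x,z_0x)$, is a correct computation (the centrality of $d(h)$ for central $h$, and the cancellation of a central element outside $P$ by primeness, both check out). Your Case A, where some $\overline{d_i(z_0)}\neq 0$ forces $\overline{[x,x^*]}\in Z(R/P)$ and thence commutativity of $R/P$, is a clean self-contained alternative that the paper does not need because its unconditional reduction bypasses the case split. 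But the benefit of your case analysis is nullified as long as Case B --- which is where the conclusion $d_1(R)\subseteq P$ and $d_2(R)\subseteq P$ must come from --- remains a sketch.
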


\begin{proof}
We are given that
\begin{equation}\label{eq1}
\overline{[d_1(x),x^*]+[x,d_2(x^*)]}\in Z(R/P)~~\mbox{for all}~~~~x\in R.
\end{equation}
A  linearization of (\ref{eq1}) gives
\begin{equation*}
\overline{[d_1(x),y^*]+[d_1(y),x^*]+[x,d_2(y^*)]+[y,d_2(x^*)]}\in Z(R/P)~~\mbox{for all}~~~~x,y\in R.
\end{equation*}
Replacing $y$ by $y^*$ in the above expression, we obtain
\begin{equation}\label{eq2}
\overline{[d_1(x),y]+[x,d_2(y)]+[d_1(y^*),x^*]+[y^*,d_2(x^*)]}\in Z(R/P)~~\mbox{for all}~~~~x,y\in R.
\end{equation}
Substituting $yh$ for $y$ in (\ref{eq2}), where $h\in H(R)\cap Z(R)\setminus\{0\}$, we find that
\begin{equation}\label{eq3}
\overline{[x,y]d_2(h)+[y^*,x^*]d_1(h)}\in Z(R/P)~~\mbox{for all}~~~~x,y\in R.
\end{equation}
As a special case of (\ref{eq3}), when we put $h=k^2$, where $k\in S(R)\cap Z(R)$, we arrive at
\begin{equation}\label{eq4}
\overline{[x,y]d_2(k)+[y^*,x^*]d_1(k)}\in Z(R/P)~~\mbox{for all}~~~~x,y\in R.
\end{equation}
If we put $y=yk$ in (\ref{eq4}), then we may write
\begin{equation}\label{eq5}
\overline{[x,y]d_2(k)-[y^*,x^*]d_1(k)}\in Z(R/P)~~\mbox{for all}~~~~x,y\in R.
\end{equation}
Replacing $yk$ instead of $y$ in (\ref{eq2}) and using (\ref{eq5}), it follows that
\begin{equation}\label{eq6}
\overline{[d_1(x),y]k+[x,d_2(y)]k-[d_1(y^*),x^*]k-[y^*,d_2(x^*)]k}\in Z(R/P)~~\mbox{for all}~~~~x,y\in R.
\end{equation}
Since $*$ is of $P$-second kind, then applying (\ref{eq2}) together with (\ref{eq6}), we arrive at
\begin{equation*}
2(\overline{[d_1(x),y]+[x,d_2(y)]})\in Z(R/P)~~\mbox{for all}~~~~x,y\in R.
\end{equation*}
The $2$-torsion freeness hypothesis yields
\begin{equation*}
\overline{[d_1(x),y]+[x,d_2(y)]}\in Z(R/P)~~\mbox{for all}~~~~x,y\in R.
\end{equation*}
Invoking (\cite{Hajar}, Theorem 2.6), we get the required result.
\end{proof}

As a consequence of Theorem \ref{thm1}, we obtain the following result.

\begin{cor}\label{cor1}
Let $(R,\ast)$ be a ring with involution of $P$-second kind where $P$ is a prime ideal of $R$ such that $char(R/P)\neq 2$. If $R$ admits a derivation $d$ such that
 $\;\;\overline{d[x,x^*]}\in Z(R/P)$ for all $x\in R,$ then
$d(R)\subseteq P\;$  or  $\;R/P$ is an integral domain.
\end{cor}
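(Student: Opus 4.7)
The plan is to reduce the corollary immediately to Theorem~\ref{thm1} by specialising the pair of derivations. Since $d$ is a derivation and the commutator satisfies the Leibniz rule, I would first expand
\[
d[x,x^*] = [d(x),x^*] + [x,d(x^*)] \quad \text{for all } x \in R.
\]
Passing to the quotient $R/P$, the hypothesis $\overline{d[x,x^*]} \in Z(R/P)$ becomes
\[
\overline{[d(x),x^*] + [x,d(x^*)]} \in Z(R/P) \quad \text{for all } x \in R.
\]

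This is precisely the hypothesis of Theorem~\ref{thm1} in the case $d_1 = d_2 = d$ (with the $+$ sign). All the standing assumptions of Theorem~\ref{thm1} are in force: the involution $\ast$ is of $P$-second kind, $P$ is a prime ideal, and $\mathrm{char}(R/P) \neq 2$. Applying Theorem~\ref{thm1} therefore gives that either $R/P$ is an integral domain, or else $d_1(R) \subseteq P$ and $d_2(R) \subseteq P$; in the latter alternative these two conclusions collapse to the single statement $d(R) \subseteq P$.

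There is no genuine obstacle here: the only thing to verify is that the Leibniz expansion is lawful and that both derivations in Theorem~\ref{thm1} can indeed be taken equal, which is immediate from the statement. Thus the corollary follows in two short lines, and I would present the proof essentially as: expand via Leibniz, invoke Theorem~\ref{thm1} with $d_1 = d_2 = d$, conclude.
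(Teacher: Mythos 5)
Your proposal is correct and is exactly the route the paper intends: the corollary is presented as an immediate consequence of Theorem~\ref{thm1}, obtained by expanding $d[x,x^*]=[d(x),x^*]+[x,d(x^*)]$ via the Leibniz rule and applying the theorem with $d_1=d_2=d$, whereupon the two containment conclusions coincide. Nothing further is needed.
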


Letting $R$ be a prime ring, then $(0)$ is a prime ideal. In this case we get an improved version of Ali et al. \cite{ADA}.

\begin{cor}\label{cor2}
Let $R$ be a 2-torsion free prime ring with involution $*$ of the second kind, $d_1$ and $d_2$ are nonzero derivations of $R$, then $[d_1(x),x^*]\pm [x,d_2(x^*)]\in Z(R)$ for all $x\in R$, if and only if $R$ is an integral domain.
\end{cor}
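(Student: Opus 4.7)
The corollary is a direct specialization of Theorem \ref{thm1} to the prime case, so the plan is essentially a matter of unpacking definitions and handling the $\pm$ symmetry and the converse.

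First I would observe that since $R$ is prime, the zero ideal $P=(0)$ is a prime ideal of $R$. Under this choice, $R/P=R$, so the hypothesis $\mathrm{char}(R/P)\neq 2$ amounts to saying $R$ is $2$-torsion free, which is assumed. Moreover, the classical ``second kind'' assumption $Z(R)\cap S(R)\neq\{0\}$ is exactly the statement that $\ast$ is of $P$-second kind for $P=(0)$, since $Z(R)\cap S(R)\nsubseteq (0)$ means there is a nonzero central skew-hermitian element. Thus the full hypothesis list of Theorem \ref{thm1} is in force.

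For the ``$+$'' version of the identity, I would apply Theorem \ref{thm1} directly: the conclusion dichotomy gives either $R/P=R$ is an integral domain, or $d_1(R)\subseteq P=(0)$ and $d_2(R)\subseteq P=(0)$, i.e.\ $d_1=d_2=0$. Since both derivations are assumed nonzero, the second alternative is ruled out, and we conclude $R$ is an integral domain. For the ``$-$'' version, I would reduce to the ``$+$'' case by the standard trick: if $d_2$ is a (nonzero) derivation then so is $-d_2$, and the identity
$$\overline{[d_1(x),x^*]-[x,d_2(x^*)]}=\overline{[d_1(x),x^*]+[x,(-d_2)(x^*)]}\in Z(R/P)$$
shows that replacing $d_2$ by $-d_2$ puts us in the hypothesis of Theorem \ref{thm1} once more, and the same conclusion follows.

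Finally, for the converse, if $R$ is an integral domain then it is commutative, so $[d_1(x),x^*]=0$ and $[x,d_2(x^*)]=0$ for every $x\in R$, hence their sum or difference trivially lies in $Z(R)$. I do not anticipate any genuine obstacle here; the only point requiring a moment of care is matching the ``second kind'' formulation of the corollary with the ``$P$-second kind'' formulation of Theorem \ref{thm1}, which is immediate for $P=(0)$.
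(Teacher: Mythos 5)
Your proposal is correct and is exactly the specialization the paper intends: the authors state the corollary immediately after the remark that $(0)$ is a prime ideal of a prime ring, leaving the routine verification (matching ``second kind'' with ``$(0)$-second kind'', ruling out the branch $d_1(R)=d_2(R)=(0)$, handling the sign by replacing $d_2$ with $-d_2$, and the trivial converse) to the reader. Your write-up supplies precisely those details and nothing needs to be changed.
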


\begin{cor}\label{cor3}
Let $R$ be a 2-torsion free prime ring with involution $*$ of the second kind, $F$ be a generalized derivation of $R$ associated with a nonzero derivation $d$, then $F(x\circ x^*)\pm d[x,x^*]\in Z(R)$ for all $x\in R$, if and only if $R$ is an integral domain.
\end{cor}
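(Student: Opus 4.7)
The plan is to reduce the two-term hypothesis to the single-term hypothesis of Corollary~\ref{cor1}, applied with $P=(0)$, by exploiting the symmetry of the given identity under the substitution $x\mapsto x^*$. The goal is to extract the pure commutator condition $d[x,x^*]\in Z(R)$, which is exactly what Corollary~\ref{cor1} needs in the prime case.

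The key step is to substitute $x\mapsto x^*$ in the hypothesis $F(x\circ x^*)\pm d[x,x^*]\in Z(R)$. Since $x^*\circ (x^*)^* = x\circ x^*$ while $[x^*,(x^*)^*] = -[x,x^*]$, the anticommutator term is invariant while the commutator term changes sign; the substitution therefore produces the companion relation $F(x\circ x^*)\mp d[x,x^*]\in Z(R)$. Subtracting the original relation from its companion, and invoking the $2$-torsion freeness of $R$, yields $d[x,x^*]\in Z(R)$ for all $x\in R$.

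Now apply Corollary~\ref{cor1} with $P=(0)$. All hypotheses transfer cleanly: $R$ being prime ensures $(0)$ is prime, $\mathrm{char}(R/P)\neq 2$ holds, and the involution being of the second kind is precisely the $(0)$-second kind condition. The conclusion gives either $d(R)\subseteq(0)$, i.e.\ $d=0$, or $R\cong R/(0)$ is an integral domain. Since $d$ is nonzero by hypothesis, $R$ must be an integral domain. The converse is immediate: when $R$ is an integral domain it is commutative, so $Z(R)=R$ and the identity is trivially satisfied for every $x$.

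There is essentially no obstacle. The whole argument rests on the observation that $x\circ x^*$ is invariant and $[x,x^*]$ is anti-invariant under $x\mapsto x^*$; after that, the proof amounts to a two-line algebraic manipulation followed by a citation of Corollary~\ref{cor1}. In particular, no appeal to the Martindale quotient ring or to the structural decomposition $F(x)=ax+d(x)$ of generalized derivations is needed, so the only properties of $F$ used are its additivity and the hypothesized identity itself.
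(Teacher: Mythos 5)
Your proof is correct and follows the route the paper evidently intends: exploit that $x\circ x^*$ is invariant and $[x,x^*]$ is anti-invariant under $x\mapsto x^*$ to cancel the $F$-term, use $2$-torsion freeness to obtain $d[x,x^*]\in Z(R)$, and then invoke Corollary~\ref{cor1} (i.e.\ Theorem~\ref{thm1} with $d_1=d_2=d$) at $P=(0)$, where nonvanishing of $d$ forces $R$ to be an integral domain. The converse is indeed trivial since an integral domain is commutative, so your argument matches the paper's (implicit) derivation.
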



Inspired by the preceding results, one many ask: what happens if we consider the Jordan product instead of the Lie product in the previous theorem? The following theorem treats this situation.
\begin{thm}\label{thm2}
Let $(R,\ast)$ be a ring with involution of $P$-second kind where $P$ is a prime ideal of $R$, such that $char(R/P)\neq 2$. If $R$ admits two derivations $d_1$ and $d_2$ such that
$$
\overline{d_1(x)\circ x^*+ x\circ d_2(x^*)}\in Z(R/P)\;\;\mbox{for all}\;\;x\in R,
$$
then  $R/P$ is an integral domain or \big($d_1(R)\subseteq P\;$ and $\;d_2(R)\subseteq P$\big).

\end{thm}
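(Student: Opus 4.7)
The plan is to follow the strategy of Theorem~\ref{thm1}, adapted to the Jordan product. I first linearize the hypothesis in $x$ and then substitute $y\mapsto y^{*}$ to obtain
\begin{equation*}
\overline{d_1(x)\circ y + d_1(y^{*})\circ x^{*} + x\circ d_2(y) + y^{*}\circ d_2(x^{*})} \in Z(R/P) \qquad (\star)
\end{equation*}
for all $x,y\in R$. The $P$-second-kind hypothesis furnishes $k\in (S(R)\cap Z(R))\setminus P$; set $h:=k^{2}$, which lies in $(H(R)\cap Z(R))\setminus P$ because $R/P$ is prime and the square of a nonzero central element is again nonzero. Primeness of $R/P$ also makes every nonzero central element a non-zero-divisor, so I may cancel the scalars $\overline{k}$, $\overline{h}$ and $\overline{2}$ freely whenever they appear as central multipliers in an identity living in $Z(R/P)$.

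Substituting $y\mapsto yh$ in $(\star)$ and subtracting $h\cdot(\star)$, the identity $a\circ(bh)=h(a\circ b)$ (valid since $h\in Z(R)$) causes the four bulk terms to cancel, leaving only
\begin{equation*}
\overline{(d_1(h)\,y^{*})\circ x^{*} + x\circ(y\,d_2(h))}\in Z(R/P).
\end{equation*}
Using $d_i(h)=2k\,d_i(k)$ and the centrality of $2k$, I pull the scalar $2k$ out of each Jordan product and cancel it to obtain
\begin{equation*}
\overline{(d_1(k)\,y^{*})\circ x^{*} + x\circ(y\,d_2(k))} \in Z(R/P). \qquad (A)
\end{equation*}
A further substitution $y\mapsto yk$ in $(A)$, using $(yk)^{*}=-ky^{*}$ and centrality of $k$, yields
\begin{equation*}
\overline{-(d_1(k)\,y^{*})\circ x^{*} + x\circ(y\,d_2(k))} \in Z(R/P). \qquad (B)
\end{equation*}
Adding and subtracting $(A)$ and $(B)$, and using the $2$-torsion freeness of $R/P$, yields (after relabelling $y^{*}\leftrightarrow y$ and $x^{*}\leftrightarrow x$) the two separated identities
\begin{equation*}
\overline{x\circ(y\,d_2(k))}\in Z(R/P)\qquad\text{and}\qquad \overline{(d_1(k)\,y)\circ x}\in Z(R/P)
\end{equation*}
for all $x,y\in R$.

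To conclude I split into cases. If $d_1(k),d_2(k)\in P$, then revisiting $(\star)$ with these simplifications and pushing through the same types of central substitutions reduces the hypothesis to the clean bilinear form $\overline{d_1(x)\circ y + x\circ d_2(y)}\in Z(R/P)$ for all $x,y\in R$, to which the Jordan analogue of (\cite{Hajar}, Theorem~2.6) applies. If instead, say, $d_2(k)\notin P$, then $\overline{R\,d_2(k)}$ is a nonzero left ideal of the prime ring $R/P$, and the identity $\overline{x\circ(y\,d_2(k))}\in Z(R/P)$ propagates to $\overline{x\circ z}\in Z(R/P)$ for every $x\in R$ and every $z$ in a nonzero ideal of $R/P$; a standard Jordan-product argument in a prime ring then forces $R/P$ to be commutative, and hence a domain (the case $d_1(k)\notin P$ is symmetric). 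The principal technical obstacle is precisely this final dichotomy: unlike the commutator case of Theorem~\ref{thm1}, where the two companion identities combine directly into a single clean bilinear identity to which Hajar's theorem applies, the Jordan versions remain modulated by the witness elements $d_i(k)$, so that converting ``$\overline{a\circ x}\in Z(R/P)$ for all $x$ with $a$ in a one-sided ideal'' into either commutativity of $R/P$ or the vanishing of both derivations modulo $P$ is the main technical hurdle.
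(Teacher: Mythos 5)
Your opening moves (linearize, replace $y$ by $yh$ with $h=k^{2}$, then by $yk$, and cancel the central non-zero-divisors $\overline{2}$, $\overline{k}$, $\overline{h}$ in the prime ring $R/P$) are sound and run parallel to the paper's. The divergence, and the gap, is in the endgame, where you keep $y$ general and arrive at $\overline{x\circ(y\,d_2(k))}\in Z(R/P)$ and $\overline{(d_1(k)y)\circ x}\in Z(R/P)$. Two steps there are not proofs. First, when $d_1(k),d_2(k)\in P$ you reduce correctly to $\overline{d_1(x)\circ y+x\circ d_2(y)}\in Z(R/P)$, but then invoke a ``Jordan analogue of (\cite{Hajar}, Theorem 2.6)''; no such result is cited or established, so this case is not closed. (It can be closed cheaply: put $y=k$ and then $x=k$ in that bilinear identity to get $\overline{d_1(x)},\overline{d_2(x)}\in Z(R/P)$ for all $x$, and apply (\cite{AMT}, Lemma 2.1); this is essentially what the paper does via the specializations $y=k$ and $x\mapsto xk$ in its relation (\ref{eqq2}).) Second, and more seriously, in the case $d_2(k)\notin P$ you appeal to a ``standard Jordan-product argument'' after asserting that the identity propagates from the left ideal $\overline{R\,d_2(k)}$ to a two-sided ideal of $R/P$; neither the propagation nor the commutativity conclusion is justified, and you yourself flag this as the unresolved ``main technical hurdle.'' As written, the argument is therefore incomplete exactly where the conclusion has to be extracted.

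The fix is the paper's device: specialize the free variable to a central element early, so the Jordan products collapse into ordinary products and no Jordan-ideal machinery is needed. In your identity $\overline{x\circ(y\,d_2(k))}\in Z(R/P)$ put $y=k$: this gives $\overline{2k\,x\,d_2(k)}\in Z(R/P)$, hence $\overline{x\,d_2(k)}\in Z(R/P)$ for all $x\in R$. Replacing $x$ by $sx$ and commuting with an arbitrary $r$ yields $[s,r]\,x\,d_2(k)\in P$ for all $r,s,x$, and primeness of $P$ forces either $[R,R]\subseteq P$ (so $R/P$ is a commutative domain) or $d_2(k)\in P$; symmetrically for $d_1(k)$. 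This eliminates your problematic second case entirely and funnels everything into the case $d_1(k),d_2(k)\in P$, which you then finish as indicated above. (A cosmetic remark: after the $yh$-substitution the surviving term is $(y^{*}d_1(h))\circ x^{*}$, not $(d_1(h)y^{*})\circ x^{*}$; since $d_1(h)$ need not be central this matters in principle, though it does not affect the sign bookkeeping you actually use.)
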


\begin{proof}
Assume that
\begin{equation}\label{eqq1}
\overline{d_1(x)\circ x^*+ x\circ d_2(x^*)}\in Z(R/P)~~\mbox{for all}~~~~x\in R.
\end{equation}
Analogously, linearizing (\ref{eqq1}), we get
\begin{equation*}
\overline{d_1(x)\circ y^*+ d_1(y)\circ x^*+ x\circ d_2(y^*)+ y\circ d_2(x^*)}\in Z(R/P)~~\mbox{for all}~~~~x,y\in R,
\end{equation*}
in such a way that
\begin{equation}\label{eqq2}
\overline{d_1(x)\circ y+ x\circ d_2(y)+ d_1(y^*)\circ x^*+ y^*\circ d_2(x^*)}\in Z(R/P)~~\mbox{for all}~~~~x,y\in R.
\end{equation}
Replacing $y$ by $yh$ in (\ref{eqq2}), where $h\in Z(R)\cap H(R)$ and using it, we obtain
\begin{equation}\label{eqq3}
\overline{(x\circ y)d_2(h)+(y^*\circ x^*)d_1(h)}\in Z(R/P)~~\mbox{for all}~~~~x,y\in R.
\end{equation}
As a special case of (\ref{eqq3}), when we put $y = k^2$, we may write
\begin{equation}\label{eqq4}
2(\overline{xd_2(h)+ x^*d_1(h)})\in Z(R/P)~~\mbox{for all}~~~~x\in R.
\end{equation}
Substituting $xk$ for $x$ in (\ref{eqq4}), where $k\in Z(R)\cap S(R)$, one can see that
\begin{equation}\label{eqq5}
2(\overline{xd_2(h)- x^*d_1(h)})\in Z(R/P)~~\mbox{for all}~~~~x\in R.
\end{equation}
Adding relations (\ref{eqq4}) and (\ref{eqq5}) and using $2$-torsion freeness hypothesis, we arrive at
\begin{equation*}
\overline{x}\in Z(R/P)~~\mbox{for all}~~x\in R\quad \mbox{or}\quad d_2(h)\in P.
\end{equation*}
If $d_2(h)\in P$ for all $h\in Z(R)\cap S(R)$, then the relation (\ref{eqq4}) yields $\overline{2x^*d_1(h)}\in Z(R/P)$ for all $x\in R$. Therefore, $\overline{x^*}\in Z(R/P)$ for all $x\in R$ or $d_1(h)\in P$, and so we have also $d_1(k)\in P$ and $d_2(k)\in P$ for all $k\in Z(R)\cap S(R)$.\\
Setting $y=k$ in (\ref{eqq2}), where $k\in Z(R)\cap S(R)$, we may write
\begin{equation}\label{eqq6}
\overline{d_1(x)-d_2(x^*)}\in Z(R/P)~~\mbox{for all}~~~~x\in R.
\end{equation}
Writing $xk$ for $x$ in (\ref{eqq6}), where $k\in Z(R)\cap S(R)$, we find that
\begin{equation}\label{eqq7}
\overline{d_1(x)+d_2(x^*)}\in Z(R/P)~~\mbox{for all}~~~~x\in R.
\end{equation}
The summation of (\ref{eqq6}) and (\ref{eqq7}) gives $\overline{d_1(x)}\in Z(R/P)$ for all $x\in R$. Hence  (\ref{eqq7}) reduces to $\overline{d_2(x)}\in Z(R/P)$ for all $x\in R$. From (\cite{AMT}, Lemma 2.1) it follows that $d_1(R)\subseteq P$ and $d_2(R)\subseteq P$ or $R/P$ is commutative. Now, if $\overline{x^*}\in Z(R/P)$ for all $x\in R$, then $[R,R]\subseteq P$. Consequently, Lemma \ref{lem} completes our proof.
\end{proof}

In \cite{AD}, it is showed that if $R$ is a 2-torsion free prime ring with involution $*$ such that
$S(R)\cap Z(R)\neq \{0\}$ admitting a nonzero derivation $d$ satisfying $d(x\circ x^{*}) = 0$ for all $x\in R$, then $R$ is commutative. The author in \cite{Fuad}, proved that this result is not true. Here we consider the more general identity  $\overline{d(x\circ x^*)}\in Z(R/P).$
\begin{cor}\label{cor4}
Let $(R,\ast)$ be a ring with involution of $P$-second kind where $P$ is a prime ideal of $R$ such that $char(R/P)\neq 2$. If $R$ admits a derivation $d$ such that 
 $$
\overline{d(x\circ x^*)}\in Z(R/P)\;\;\mbox{for all}\;\;x\in R,
 $$
then $\;R/P\;$ is an integral domain or $\;d(R)\subseteq P.$
\end{cor}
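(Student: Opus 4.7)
The plan is to reduce Corollary \ref{cor4} to Theorem \ref{thm2} by taking $d_1 = d_2 = d$. The key observation is that since $d$ is a derivation, expanding $d$ on the Jordan product yields
\begin{equation*}
d(x\circ x^*) \;=\; d(xx^* + x^*x) \;=\; d(x)x^* + xd(x^*) + d(x^*)x + x^*d(x) \;=\; d(x)\circ x^* + x\circ d(x^*).
\end{equation*}
Hence the hypothesis $\overline{d(x\circ x^*)}\in Z(R/P)$ for all $x\in R$ rewrites exactly as
\begin{equation*}
\overline{d(x)\circ x^* + x\circ d(x^*)}\in Z(R/P)\quad\mbox{for all}\quad x\in R.
\end{equation*}

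This is precisely the hypothesis of Theorem \ref{thm2} applied to the pair $(d_1,d_2)=(d,d)$. All the other assumptions of Theorem \ref{thm2} (namely that $*$ is of $P$-second kind, $P$ is a prime ideal, and $\mathrm{char}(R/P)\neq 2$) are carried over verbatim from the hypotheses of Corollary \ref{cor4}. Therefore the theorem delivers the conclusion: either $R/P$ is an integral domain, or $d(R)\subseteq P$ and $d(R)\subseteq P$, which collapses to the single alternative $d(R)\subseteq P$.

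There is essentially no obstacle here; the only point requiring care is to confirm the algebraic identity relating $d(x\circ x^*)$ to $d(x)\circ x^* + x\circ d(x^*)$, but this is immediate from the Leibniz rule. The proof is thus a two-line invocation of the already-established Theorem \ref{thm2}.
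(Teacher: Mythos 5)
Your reduction is correct and is exactly the intended derivation: the Leibniz rule gives $d(x\circ x^*) = d(x)\circ x^* + x\circ d(x^*)$, so Corollary \ref{cor4} is the special case $d_1=d_2=d$ of Theorem \ref{thm2}, whose two-part conclusion collapses to the single alternative $d(R)\subseteq P$. The paper states this corollary without an explicit proof, and your two-line argument is precisely the omitted step.
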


As an application of Theorem \ref{thm2}, we establish some commutativity criteria for prime rings.

\begin{cor}\label{cor5}
Let $R$ be a 2-torsion free prime ring with involution $*$ of the second kind, $d_1$ and $d_2$ are nonzero derivations of $R$, then $d_1(x)\circ x^*\pm x\circ d_2(x^*)\in Z(R)$ for all $x\in R$, if and only if $R$ is an integral domain.
\end{cor}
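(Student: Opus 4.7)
The plan is to read this off directly from Theorem~\ref{thm2} by specializing to $P=(0)$. Since $R$ is prime, the zero ideal is a prime ideal, and since $R$ is $2$-torsion free, $\mathrm{char}(R/(0))=\mathrm{char}(R)\neq 2$. Moreover, the hypothesis that $*$ is of the second kind means $Z(R)\cap S(R)\neq\{0\}$, i.e.\ $Z(R)\cap S(R)\not\subseteq (0)$, so $*$ is of $(0)$-second kind in the sense of Definition~1. This verifies every hypothesis of Theorem~\ref{thm2} with $P=(0)$.

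For the forward direction with the $+$ sign, assume
$$d_1(x)\circ x^*+x\circ d_2(x^*)\in Z(R)\quad\text{for all } x\in R.$$
Since $Z(R)=Z(R/(0))$, this is exactly the hypothesis of Theorem~\ref{thm2} applied with $P=(0)$. The conclusion gives the dichotomy
$$\bigl(d_1(R)\subseteq (0)\text{ and }d_2(R)\subseteq (0)\bigr)\quad\text{or}\quad R/(0)\cong R\text{ is an integral domain}.$$
The first alternative forces $d_1=d_2=0$, contradicting the assumption that both derivations are nonzero. Hence $R$ is an integral domain. For the $-$ sign, observe that $-d_2$ is again a nonzero derivation of $R$, and the identity
$$d_1(x)\circ x^*-x\circ d_2(x^*)=d_1(x)\circ x^*+x\circ (-d_2)(x^*)\in Z(R)$$
reduces the $-$ case to the $+$ case already handled.

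Conversely, if $R$ is an integral domain then $R$ is commutative, so $Z(R)=R$ and consequently every element of $R$ lies in $Z(R)$; in particular the expression $d_1(x)\circ x^*\pm x\circ d_2(x^*)$ is central for all $x\in R$ without any further hypothesis on $d_1,d_2$.

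There is essentially no obstacle here beyond matching up notation: the only subtle point is checking that ``second kind'' for the prime ring $R$ coincides with ``$(0)$-second kind'' in the framework of Definition~1, and that the escape clause $d_1(R)\subseteq P$, $d_2(R)\subseteq P$ of Theorem~\ref{thm2} becomes $d_1=d_2=0$ when $P=(0)$, which is ruled out by the nonzero hypothesis.
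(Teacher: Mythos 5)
Your proposal is correct and matches the paper's intent exactly: the paper offers Corollary~\ref{cor5} as an immediate specialization of Theorem~\ref{thm2} to $P=(0)$, which is precisely what you carry out, including the standard reduction of the $-$ sign to the $+$ sign via $-d_2$ and the observation that the escape clause $d_i(R)\subseteq(0)$ is excluded by the nonvanishing of the derivations.
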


\begin{cor}\label{cor6}
Let $R$ be a 2-torsion free prime ring with involution $*$ of the second kind. If $R$ admits a generalized derivation $F$ associated with a nonzero derivation $d$, then $F[x,x^*]\pm d(x\circ x^*)\in Z(R)$ for all $x\in R$, if and only if $R$ is an integral domain.
\end{cor}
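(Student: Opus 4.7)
The \emph{if} direction is immediate: in a commutative $R$, $[x,x^*]=0$ and the expression reduces to $\pm\,d(2xx^*) \in R = Z(R)$.

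For the forward direction, my strategy is to reduce the hypothesis to one that is already handled by Corollary~\ref{cor4} (or Theorem~\ref{thm2}, applied with $P=(0)$). First I would expand the hypothesis using $F(xy) = F(x)y + xd(y)$ and the Leibniz rule for $d$, and introduce the auxiliary map $T := F - d$. Comparing the two Leibniz-type identities gives $T(xy) = T(x)y$, so $T$ is an additive left $R$-multiplier. Writing $F = T + d$ in the hypothesis and using the identities
\[
d[x,x^*] + d(x\circ x^*) = 2\,d(xx^*), \qquad d[x,x^*] - d(x\circ x^*) = -2\,d(x^*x),
\]
collapses the "$+$" case to $T(x)x^* - T(x^*)x + 2d(xx^*) \in Z(R)$ and the "$-$" case to $T(x)x^* - T(x^*)x - 2d(x^*x) \in Z(R)$.

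Next, I would exploit the second-kind hypothesis via the dual substitutions $x\mapsto xh$ with $h\in Z(R)\cap H(R)$ and $x\mapsto xk$ with $k\in Z(R)\cap S(R)\setminus\{0\}$. Using $T(xh)=T(x)h$, $T((xk)^*) = -T(k)x^*$, and the centrality of $h, k, d(h), d(k)$, I compare each substituted identity with $h^2$ (resp.\ $k^2$) times the collapsed identity and divide out the central nonzero factor via primeness. This should yield two companion identities
\[
T(h)[x,x^*] + 4\,d(h)\,xx^* + 2h\,d(xx^*) \in Z(R), \quad T(k)[x,x^*] + 4\,d(k)\,xx^* + 2k\,d(xx^*) \in Z(R).
\]
Multiplying the first by $k$ and the second by $h$ and subtracting eliminates the $d(xx^*)$ term, leaving
$(hT(k) - kT(h))[x,x^*] + 4(h\,d(k) - k\,d(h))\,xx^* \in Z(R)$, whose second coefficient is central. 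Either this coefficient is nonzero for some admissible pair $(h,k)$, forcing $xx^*\in Z(R)$ for every $x$ and hence (via the classical fact that $[R,R]\subseteq Z(R)$ in a prime ring implies commutativity) the conclusion; or $h\,d(k) = k\,d(h)$ for all such $h,k$, which together with the linearised hypothesis reduces the remaining constraint to an identity of the form $d(x\circ x^*)\in Z(R)$, to which Corollary~\ref{cor4} applies directly. Since $d\neq 0$ by hypothesis, the alternative $d(R)=0$ is excluded and $R$ must be an integral domain. The "$-$" case is entirely parallel with $x^*x$ replacing $xx^*$.

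The principal obstacle is this case-splitting step: because $T(h)$ and $T(k)$ are generically non-central, the naïve cancellation leaves residual left-multiplier terms that only disappear after combining the hermitian and skew substitutions. This is precisely where the second-kind hypothesis, $2$-torsion-freeness, and primeness enter essentially: $2$-torsion-freeness to make sense of dividing by $2$ in the collapsed form, second-kind to supply both $h$ and $k$, and primeness to cancel the central nonzero factors and to convert $[R,R]\subseteq Z(R)$ into commutativity.
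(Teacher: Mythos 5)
The paper offers no proof of this corollary --- it is stated as an ``application of Theorem~\ref{thm2}'' with $P=(0)$ --- and you are right that it is not a literal specialization of that theorem, so your reduction via the multiplier $T:=F-d$ (which satisfies $T(xy)=T(x)y$ and collapses the hypothesis to $T(x)x^*-T(x^*)x+2d(xx^*)\in Z(R)$, resp.\ with $-2d(x^*x)$) is a sensible and correct start. The problem is that your two displayed ``companion identities'' are not what the substitutions produce. If you actually subtract $h^2$ times the collapsed identity from its $x\mapsto xh$ instance, the multiplier terms cancel \emph{identically}, because for central $h$ one has $T(x^*)h=T(x^*h)=T(hx^*)=T(h)x^*$; what survives is just $4\,xx^*h\,d(h)\in Z(R)$, with no $T(h)[x,x^*]$ term and no $2h\,d(xx^*)$ term. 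Likewise $hT(k)-kT(h)=T(kh)-T(hk)=0$ identically, so the coefficient you plan to exploit in the elimination step vanishes for trivial reasons: the ``residual non-central multiplier terms'' you name as the principal obstacle are not there, but neither are the identities you intend to combine. As written, the chain leading to ``$xx^*\in Z(R)$ or $h\,d(k)=k\,d(h)$'' does not go through.

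The strategy is salvageable, and more cheaply than you suggest. From $xx^*k\,d(k)\in Z(R)$ with $k\,d(k)$ central and $k\neq 0$, primeness gives either $xx^*\in Z(R)$ for all $x$ (whence, after linearizing, substituting $y\mapsto yk$ and using $2$-torsion freeness, $xy\in Z(R)$ for all $x,y$, which in a prime ring forces commutativity, hence an integral domain) or $d(Z(R)\cap S(R))=0$. In the latter branch your claim that the constraint ``reduces to $d(x\circ x^*)\in Z(R)$'' is asserted without the necessary mechanism: you must first linearize the collapsed identity in two variables, apply $y\mapsto yk$ to decouple the starred and unstarred halves and obtain $T(y)z-T(z)y+2d(yz)\in Z(R)$ for all $y,z\in R$, and only then does setting $z=y$ kill the multiplier terms and yield $d(y^2)\in Z(R)$, after which linearization and Corollary~\ref{cor4} with $P=(0)$ finish the argument using $d\neq 0$. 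So the skeleton ($T=F-d$, dual $h$/$k$ substitutions, reduction to Corollary~\ref{cor4}) is right, but the two companion identities and the elimination step built on them are incorrect and must be replaced by the computation above.
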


\begin{thm}\label{thm3}
Let $(R,\ast)$ be a ring with involution of $P$-second kind where $P$ is a prime ideal of $R$, such that $char(R/P)\neq 2$. If $R$ admits two derivations $d_1$ and $d_2$ such that 
$$
\overline{d_1(x)d_2(x^*)}\in Z(R/P)\;\;\mbox{for all}\;\;x\in R,
$$
then one of the following assertions holds:\\
{\em (1)} $d_1(R)\subseteq P$;\\
{\em (2)} $d_2(R)\subseteq P$;\\
{\em (3)} $R/P$ is an integral domain.
\end{thm}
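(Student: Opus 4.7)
The plan is to follow the blueprint of Theorem~\ref{thm2}: linearize, substitute central hermitian/skew-hermitian elements to separate the contributions of $d_1$ and $d_2$, reduce to a tractable identity, and invoke \cite[Lemma~2.1]{AMT}. Replacing $x$ by $x+y$ in the hypothesis and subtracting the two diagonal identities gives $\overline{d_1(x)d_2(y^*)+d_1(y)d_2(x^*)}\in Z(R/P)$; swapping $y$ with $y^*$ then yields the working identity, call it $(\star)$:
\[
\overline{d_1(x)d_2(y)+d_1(y^*)d_2(x^*)}\in Z(R/P)\quad\text{for all } x,y\in R.
\]

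Next I would exploit the $P$-second kind hypothesis. Fix $k\in Z(R)\cap S(R)$ with $k\notin P$. Setting $y=k$ in $(\star)$ gives $\overline{d_1(x)d_2(k)-d_1(k)d_2(x^*)}\in Z(R/P)$. Setting $y=k^2\in Z(R)\cap H(R)$, expanding via $d_i(k^2)=2k\,d_i(k)$, and then cancelling the central element $2k$ (a non-zero-divisor in the prime ring $R/P$, since $k\notin P$ and $\mathrm{char}(R/P)\neq 2$) yields $\overline{d_1(x)d_2(k)+d_1(k)d_2(x^*)}\in Z(R/P)$. Adding and subtracting these two produces
\[
\overline{d_1(x)d_2(k)}\in Z(R/P)\quad\text{and}\quad \overline{d_1(k)d_2(x^*)}\in Z(R/P)
\]
for all $x\in R$ and all $k\in Z(R)\cap S(R)$.

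A case split follows. If some $k\in Z(R)\cap S(R)$ has $d_2(k)\notin P$, then $d_2(k)\in Z(R)$ (as $k$ is central), so $\overline{d_2(k)}$ is a central non-zero-divisor in $R/P$. Expanding $\overline{d_1(x)d_2(k)}\in Z(R/P)$ collapses, by centrality of $d_2(k)$, to $[d_1(x),r]\,d_2(k)\in P$; primeness then gives $[d_1(x),r]\in P$, i.e.\ $\overline{d_1(x)}\in Z(R/P)$ for every $x\in R$, and \cite[Lemma~2.1]{AMT} yields $d_1(R)\subseteq P$ or $R/P$ is commutative (hence an integral domain). The symmetric subcase $d_1(k)\notin P$ leads analogously to $d_2(R)\subseteq P$ or $R/P$ an integral domain, via $\overline{d_1(k)d_2(x^*)}\in Z(R/P)$.

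The remaining situation, where $d_1(k),d_2(k)\in P$ for every $k\in Z(R)\cap S(R)$, is the main obstacle. Substituting $y\to yk$ in $(\star)$ and reducing modulo $P$ kills the $d_i(k)$-terms and yields $\overline{k\bigl(d_1(x)d_2(y)-d_1(y^*)d_2(x^*)\bigr)}\in Z(R/P)$; cancelling $k$ and combining with $(\star)$ produces the strong identity
\[
\overline{d_1(x)d_2(y)}\in Z(R/P)\quad\text{for all } x,y\in R.\qquad(\diamond)
\]
From $(\diamond)$ I intend to deduce the trichotomy as follows. Assume $d_2(R)\not\subseteq P$ and pick $a=d_2(y_0)\notin P$; the identity $[d_1(x)a,r]\in P$ expands to $d_1(x)[a,r]+[d_1(x),r]a\in P$, and replacing $r$ by $ra$ while reusing the same relation should give $R\,[d_1(x),a]\,a\subseteq P$, hence $[d_1(x),a]\,a\in P$ by primeness. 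Further substitutions $x\to xs$, systematically combined with $(\diamond)$ and primeness of $R/P$, are expected to force $[d_1(x),r]\in P$ for all $x,r\in R$, at which point \cite[Lemma~2.1]{AMT} yields $d_1(R)\subseteq P$ or $R/P$ commutative. This final commutator bookkeeping, which converts $(\diamond)$ into the required conclusion, is the delicate step of the argument.
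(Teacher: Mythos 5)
Your argument tracks the paper's proof closely up to the key identity $(\diamond)$: $\overline{d_1(x)d_2(y)}\in Z(R/P)$ for all $x,y\in R$. (The paper derives this unconditionally, by playing the substitutions $y\mapsto yh$, $h\mapsto k^2$ and $y\mapsto yk$ off against the linearized identity; you reach it only after a case split on whether $d_i\big(Z(R)\cap S(R)\big)\subseteq P$, but your treatment of the complementary cases --- using that $d_i(k)$ is central and then \cite[Lemma~2.1]{AMT} --- is correct, so this difference is cosmetic.) The genuine gap is the last step, which you yourself flag as ``delicate'' and do not carry out. The route you sketch --- fixing $a=d_2(y_0)\notin P$ and manipulating $[d_1(x)a,r]\in P$ --- will not deliver $[d_1(x),r]\in P$: the computation you describe only yields $r\,[d_1(x),a]\,a\in P$, hence $[d_1(x),a]\,a\in P$, i.e.\ $\overline{[d_1(x),a]}\;\overline{a}=0$ in $R/P$; since $\overline{a}$ is neither central nor known to be a non-zero-divisor, you cannot cancel it, and even $[d_1(x),a]\in P$ for this one fixed $a$ would be far from the claim you need. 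By keeping $d_1(x)$ and $d_2(y_0)$ adjacent you have discarded the one degree of freedom that makes $(\diamond)$ exploitable under a prime ideal.

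The paper's completion instead inserts an arbitrary element between the two derivation values. Put $y\mapsto yr$ in $(\diamond)$: since $\overline{d_1(x)d_2(y)}$ is central, taking the commutator with $\overline{r}$ leaves $[d_1(x)yd_2(r),r]\in P$ for all $x,y,r$. Then $x\mapsto xt$ (using the same relation with $y$ replaced by $ty$) gives $[x,r]\,d_1(t)\,y\,d_2(r)\in P$, and $x\mapsto sx$ gives $[s,r]\,R\,d_1(t)\,R\,d_2(r)\subseteq P$ for all $r,s,t\in R$. Primeness of $P$ applied twice forces, for each triple, $[s,r]\in P$ or $d_1(t)\in P$ or $d_2(r)\in P$, and Brauer's trick (an additive group is not the union of two proper subgroups) converts this pointwise alternative into the global trichotomy: $d_1(R)\subseteq P$, or $d_2(R)\subseteq P$, or $R/P$ is commutative, hence an integral domain since it is prime. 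Replacing your final paragraph by this computation completes the proof.
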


\begin{proof}
By assumption, we have
\begin{equation}\label{equation1}
\overline{d_1(x)d_2(x^*)}\in Z(R/P)~~\mbox{for all}~~~~x\in R.
\end{equation}
A linearization of (\ref{equation1}) yields
\begin{equation}\label{equation2}
\overline{d_1(x)d_2(y^*)+d_1(y)d_2(x^*)}\in Z(R/P)~~\mbox{for all}~~~~x,y\in R.
\end{equation}
Replacing $y$ by $yh$ in (\ref{equation2}), where $h\in Z(R)\cap H(R)\setminus \{0\}$, it follows that
\begin{equation}\label{equation3}
\overline{d_1(x)y^*d_2(h)+d_1(h)yd_2(x^*)}\in Z(R/P)~~\mbox{for all}~~~~x,y\in R.
\end{equation}
Writing $k^2$ instead of $h$ in (\ref{equation3}), where $h\in Z(R)\cap S(R)\setminus \{0\}$ and employing it, we can see that
\begin{equation}\label{equation4}
\overline{d_1(x)y^*d_2(k)+d_1(k)yd_2(x^*)}\in Z(R/P)~~\mbox{for all}~~~~x,y\in R.
\end{equation}
Substituting $yk$ for $yk$ in (\ref{equation4}) and applying the hypothesis, we arrive at
\begin{equation}\label{equation5}
\overline{-d_1(x)y^*d_2(k)+d_1(k)yd_2(x^*)}\in Z(R/P)~~\mbox{for all}~~~~x,y\in R.
\end{equation}
Putting $yk$ instead of $y$ in (\ref{equation2}), we are forced to conclude that
\begin{equation}\label{equation6}
\overline{-d_1(x)d_2(y^*)k-d_1(x)y^*d_2(k)+d_1(y)d_2(x^*)k+d_1(k)yd_2(x^*)}\in Z(R/P).
\end{equation}
Invoking (\ref{equation5}) together with (\ref{equation6}), we find that
\begin{equation}\label{equation7}
\overline{-d_1(x)d_2(y^*)+d_1(y)d_2(x^*)}\in Z(R/P)~~\mbox{for all}~~~~x,y\in R.
\end{equation}
Subtracting (\ref{equation2}) from (\ref{equation7}) and employing the $2$-torsion freeness hypothesis, we easily get
\begin{equation}\label{equation8}
\overline{d_1(x)d_2(y)}\in Z(R/P)~~\mbox{for all}~~~~x,y\in R.
\end{equation}
Writing $yr$ instead of $y$ in (\ref{equation8}), it is obvious to see that
\begin{equation*}
[d_1(x)yd_2(r),r]\in P~~\mbox{for all}~~~~r,x,y\in R.
\end{equation*}
Replacing $x$ by $xt$ in the above equation and using it, we arrive at
\begin{equation}\label{equation9}
[x,r]d_1(t)yd_2(r)\in P~~\mbox{for all}~~~~r,t,x,y\in R.
\end{equation}
Substituting $sx$ for $x$ in (\ref{equation9}) and using it, one can verify that
\begin{equation}
[s,r]Rd_1(t)Rd_2(r)\subseteq P~~\mbox{for all}~~~~r,s,t\in R.
\end{equation}
Using the primeness of $P$ together with Brauer's trick, we obtain that $R/P$ is an integral domain or $d_1(R)\subseteq P$ or $d_2(R)\subseteq P$. This completes our proof.
\end{proof}

\noindent
Now if we set $d=d_{1}=d_{2}$, then we get the following result.

\begin{cor}\label{cor7}
Let $(R,\ast)$ be a ring with involution of $P$-second kind where $P$ is a prime ideal of $R$ such that $char(R/P)\neq 2$. If $R$ admits a derivation $d$ such that $$
\overline{d(x)d(x^*)}\in Z(R/P)\;\;\mbox{for all}\;\;x\in R,
$$ then  $\;d(R)\subseteq P\;$ or $\;R/P\;$ is an integral domain.
\end{cor}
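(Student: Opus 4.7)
The plan is to obtain this statement as an immediate specialization of Theorem \ref{thm3}. Since the hypothesis of the corollary is exactly the hypothesis of Theorem \ref{thm3} with the two derivations identified, namely $d_1 = d_2 = d$, no new argument is required beyond invoking the theorem and simplifying the conclusion in this symmetric setting.

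More concretely, I would begin by observing that the ring $(R,\ast)$ and the prime ideal $P$ in the corollary satisfy all the standing assumptions of Theorem \ref{thm3}: the involution is of $P$-second kind, and $\mathrm{char}(R/P) \neq 2$. Then, defining $d_1 := d$ and $d_2 := d$, the given identity $\overline{d(x)d(x^*)} \in Z(R/P)$ for all $x \in R$ rewrites verbatim as $\overline{d_1(x)d_2(x^*)} \in Z(R/P)$ for all $x \in R$. Hence Theorem \ref{thm3} applies and yields one of three alternatives: $d_1(R) \subseteq P$, $d_2(R) \subseteq P$, or $R/P$ is an integral domain.

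The final step is purely cosmetic: the first two alternatives both read $d(R) \subseteq P$ under the identification $d_1 = d_2 = d$, so they collapse into the single assertion $d(R) \subseteq P$. Combining this with the third alternative gives the dichotomy stated in the corollary. There is no genuine obstacle in this proof, since all substantive work, including the linearization, the exploitation of central hermitian and skew-hermitian elements, and the application of Brauer's trick together with the primeness of $P$, has already been carried out in the proof of Theorem \ref{thm3}.
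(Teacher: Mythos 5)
Your proposal is correct and matches the paper exactly: the authors introduce Corollary \ref{cor7} with the remark that it follows by setting $d = d_1 = d_2$ in Theorem \ref{thm3}, which is precisely your specialization, with the first two alternatives of the theorem collapsing into the single conclusion $d(R) \subseteq P$. No further comment is needed.
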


\begin{cor}\label{cor8}
Let $R$ be a $2$-torsion free prime ring with involution $*$ of the second kind, $d_1$ and $d_2$ are nonzero derivations of $R$, then $d_1(x)d_2(x^*)\in Z(R)$ for all $x\in R$, if and only if $R$ is an integral domain.
\end{cor}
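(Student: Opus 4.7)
The plan is to deduce Corollary 8 from Theorem 3 by specializing $P$ to the zero ideal. First I would verify that the hypotheses of Theorem 3 transfer to the prime-ring setting. Since $R$ is prime, the zero ideal is a prime ideal; since $R$ is $2$-torsion free and $P=(0)$, we have $\mathrm{char}(R/P)=\mathrm{char}(R)\neq 2$. By Definition~1, the involution $*$ is $(0)$-second kind precisely when $Z(R)\cap S(R)\nsubseteq (0)$, i.e.\ $Z(R)\cap S(R)\neq\{0\}$, which is exactly the classical second-kind condition already assumed on $*$. Finally, the identity $d_1(x)d_2(x^*)\in Z(R)$ translates verbatim to $\overline{d_1(x)d_2(x^*)}\in Z(R/P)$ under the canonical identification $R\cong R/(0)$.

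With these hypotheses in place, Theorem~\ref{thm3} yields one of three alternatives: $d_1(R)\subseteq P$, $d_2(R)\subseteq P$, or $R/P$ is an integral domain. The first two alternatives force $d_1=0$ or $d_2=0$ respectively, each contradicting the assumption that both derivations are nonzero. Hence the only surviving alternative is that $R/P\cong R$ is an integral domain, which gives the forward implication.

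For the converse there is essentially nothing to prove: if $R$ is an integral domain, then $R$ is commutative and $Z(R)=R$, so trivially $d_1(x)d_2(x^*)\in Z(R)$ for every $x\in R$.

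I do not anticipate any real obstacle here; the corollary is a direct translation of Theorem~\ref{thm3} into the prime-ring vocabulary, with the reverse direction being immediate from commutativity. The only point worth stating explicitly is the equivalence between the classical notion of an involution of the second kind and the paper's notion of $(0)$-second kind, which follows at once from Definition~1 together with Remark~1(1) under the standing $2$-torsion freeness hypothesis.
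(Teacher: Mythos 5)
Your proposal is correct and matches the paper's (implicit) route exactly: the corollary is obtained by applying Theorem~\ref{thm3} with $P=(0)$, noting that $d_1(R)\subseteq(0)$ or $d_2(R)\subseteq(0)$ would contradict the derivations being nonzero, and observing that the converse is trivial since an integral domain is commutative. Your explicit check that ``second kind'' coincides with ``$(0)$-second kind'' under $2$-torsion freeness is a worthwhile detail the paper leaves unstated.
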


In (\cite{Nejjar1}, Theorems 3.5 and 3.8), it is demonstrated that if $(R,*)$ is a 2-torsion free prime ring with involution provided with a derivation $d$ which satisfies $d(x)\circ d(x^{*})\pm x\circ x^{*}\in Z(R)$  for all $x \in R $  or $[d(x),d(x^{*})]\pm x\circ x^{*}\in Z(R)$  for all $x \in R $, then $R$ must be commutative. This result has been generalized in  (\cite{ZERRA3}, Theorem 3)  by considering a pair of derivations $(d,g)$. Inspired by the previous results, the next theorem treats the following special identities:\\
$\overline{d_{1}(x)d_{2}(x^{*})- x\circ x^{*}}\in Z(R/P)$ for all $x\in R$,\\
$\overline{d_{1}(x)d_{2}(x^{*})-  [x,x^{*}]}\in Z(R/P)$ for all $x\in R.$

\begin{thm}\label{thm4}
Let $(R,\ast)$ be a ring with involution of $P$-second kind where $P$ is a prime ideal of $R$ such that $char(R/P)\neq 2$. If $R$ admits two derivations $d_1$ and $d_2,$ then the following assertions are equivalent:\\
{\rm (1)} $\overline{d_1(x)d_2(x^*)-[x,x^*]}\in Z(R/P)$ for all $x\in R$;\\
{\rm (2)} $\overline{d_1(x)d_2(x^*)-x\circ x^*}\in Z(R/P)$ for all $x\in R$;\\
{\rm (3)} $R/P$ is an integral domain.
\end{thm}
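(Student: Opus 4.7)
The implications $(3)\Rightarrow(1)$ and $(3)\Rightarrow(2)$ are automatic, since $R/P$ being an integral domain forces commutativity and then every element lies in $Z(R/P)$. The substance is in $(1)\Rightarrow(3)$ and $(2)\Rightarrow(3)$, which I propose to handle in parallel by reducing to Theorem \ref{thm3}.

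Starting from either hypothesis, linearize in $x$ to obtain a symmetric bilinear identity modulo $P$, then substitute $y\to yh$ with $h\in Z(R)\cap H(R)$. Since $h$ is central and hermitian, $(yh)^{*}=hy^{*}$, and hence each of the terms $[x,y^{*}]$, $[y,x^{*}]$ (resp.\ $x\circ y^{*}$, $y\circ x^{*}$) picks up a clean factor of $h$. Consequently the full $h$-multiple of the linearized identity is central modulo $P$ and drops out, leaving precisely
$$\overline{d_{1}(x)\,y^{*}\,d_{2}(h)+y\,d_{1}(h)\,d_{2}(x^{*})}\in Z(R/P),$$
which is equation $(3)$ in the proof of Theorem \ref{thm3}. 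From this point the argument of Theorem \ref{thm3} transports verbatim: setting $h=k^{2}$ for $k\in Z(R)\cap S(R)\setminus P$, cancelling the scalar $2k$ via primeness, replacing $y\to yk$ to kill the $d_{1}(x)y^{*}d_{2}(k)$ term by subtraction, and then applying Brauer's trick to $[s,r]\,R\,d_{1}(t)\,R\,d_{2}(r)\subseteq P$ yields the trichotomy $d_{1}(R)\subseteq P$, $d_{2}(R)\subseteq P$, or $R/P$ is an integral domain.

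It remains to show that the first two alternatives are still consistent with $(3)$. If $d_{1}(R)\subseteq P$ or $d_{2}(R)\subseteq P$, the hypothesis collapses to $\overline{[x,x^{*}]}\in Z(R/P)$ in case $(1)$ and to $\overline{x\circ x^{*}}\in Z(R/P)$ in case $(2)$. Linearizing and substituting $y\to yk$ for $k\in Z(R)\cap S(R)\setminus P$, then combining the two equations and using primeness together with $2$-torsion freeness and the surjectivity of $*$, one deduces $\overline{[y,z]}\in Z(R/P)$ (case $(1)$) or $\overline{y\circ z}\in Z(R/P)$ (case $(2)$) for all $y,z\in R$. In case $(1)$, from $y[x,y]\in Z(R/P)$ and $[y[x,y],z]\in P$ one extracts $[y,z][x,y]\in P$, and then $[y,z]\,R\,[x,y]\subseteq P$ via centrality of $[y,z]$; primeness forces each $y$ into $Z(R/P)$, so $R/P$ is commutative. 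In case $(2)$, one first obtains $\overline{y^{2}}\in Z(R/P)$, and (using that $\overline{yz}$ and $\overline{zy}$ commute in $R/P$) derives $[y,z](y\circ z)=(yz)^{2}-(zy)^{2}\in Z(R/P)$; applying $[\cdot,r]$ gives $(y\circ z)[[y,z],r]\in P$, and a union-of-subgroups argument per $y$ (either $y\circ z\in P$ for all $z$, which yields $yRy\subseteq P$ and hence $y\in P$, or $[y,z]\in Z(R/P)$ for all $z$) shows every commutator in $R/P$ is central, reducing to case $(1)$. In either situation $R/P$ is commutative, hence an integral domain.

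The most delicate step is the case $(2)$ analysis after reduction: extracting commutativity from $\overline{y\circ z}\in Z(R/P)$ requires the auxiliary identity $[y,z](y\circ z)\in Z(R/P)$, which in turn uses $y^{2}\in Z(R/P)$ and the (non-obvious) fact that $\overline{yz}$ and $\overline{zy}$ commute. Once these are in hand, the remaining primeness and subgroup-union argument is routine, but the chain of reductions is visibly longer than in case $(1)$.
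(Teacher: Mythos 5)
There is a genuine gap at the central step: the claim that ``the argument of Theorem \ref{thm3} transports verbatim'' to produce the trichotomy. The endgame of Theorem \ref{thm3} --- the substitutions leading to $[d_1(x)yd_2(r),r]\in P$ for \emph{all} $r\in R$ and then to $[s,r]Rd_1(t)Rd_2(r)\subseteq P$ --- is not run from the ``extra terms'' identity $\overline{d_1(x)y^*d_2(h)+yd_1(h)d_2(x^*)}\in Z(R/P)$ (which only involves $d_2(h)$ for \emph{central hermitian} $h$ and is far too weak for that), but from equation (\ref{equation8}), $\overline{d_1(x)d_2(y)}\in Z(R/P)$ for all $x,y$, which is obtained by the $y\mapsto yk$ separation applied to the linearized \emph{hypothesis}. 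In Theorem \ref{thm4} the same separation applied to the linearized hypothesis yields only $\overline{d_1(x)d_2(y)-[x,y]}\in Z(R/P)$ (resp.\ $\overline{d_1(x)d_2(y)-x\circ y}\in Z(R/P)$): the commutator (anticommutator) term does not drop out, because it is not a multiple of $d_i(k)$ and survives every substitution you list. Consequently $[s,r]Rd_1(t)Rd_2(r)\subseteq P$ is not available, the trichotomy is not established by your argument, and the entire case in which $d_1,d_2$ merely vanish modulo $P$ on $Z(R)\cap H(R)$ and $Z(R)\cap S(R)$ (while neither $d_1(R)\subseteq P$ nor $d_2(R)\subseteq P$) is left untreated. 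That case is precisely where the paper spends most of its effort: from $\overline{d_1(x)d_2(y)-[x,y]}\in Z(R/P)$ it can only commute with $x$ itself (giving $[d_1(x)yd_2(x),x]\in P$), and must then run a delicate pointwise analysis (equations (\ref{equa7})--(\ref{equa23}) and the Brauer-type arguments that follow) to force $[d_2(x),x]\in P$ and eventually $[y,x]Rd_2(x)\subseteq P$.

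Your handling of the degenerate alternatives is essentially sound: when $d_1(R)\subseteq P$ or $d_2(R)\subseteq P$ the hypotheses do collapse to $\overline{[x,x^*]}\in Z(R/P)$ or $\overline{x\circ x^*}\in Z(R/P)$, and your derivations of $\overline{[y,z]}\in Z(R/P)$, respectively of $\overline{y\circ z}\in Z(R/P)$ followed by the $[y,z](y\circ z)\in Z(R/P)$ device, correctly yield commutativity of $R/P$ (the paper reaches the commutator case via Lemma \ref{lem} instead). But these alternatives are the easy tail of the proof; the missing middle case is the substance of the theorem, so the proposal as written does not prove the statement.
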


\begin{proof}
$(1) \implies (3)$ We are given that
\begin{equation}\label{equa1}
\overline{d_1(x)d_2(x^*)-[x,x^*]}\in Z(R/P)~~\mbox{for all}~~~~x\in R.
\end{equation}
Linearizing (\ref{equa1}), we obtain
\begin{equation*}
\overline{d_1(x)d_2(y^*)+d_1(y)d_2(x^*)-[x,y^*]-[y,x^*]}\in Z(R/P)~~\mbox{for all}~~~~x,y\in R.
\end{equation*}
Namely, we get that
\begin{equation}\label{equa2}
\overline{d_1(x)d_2(y)-[x,y]+d_1(y^*)d_2(x^*)-[y^*,x^*]}\in Z(R/P)~~\mbox{for all}~~~~x,y\in R.
\end{equation}
Substituting $yh$ for $y$ in (\ref{equa2}), where $h\in H(R)\cap Z(R)\setminus \{0\}$, we can see that
\begin{equation*}
\overline{d_1(x)y^*d_2(h)+yd_2(x^*)d_1(h)}\in Z(R/P)~~\mbox{for all}~~~~x,y\in R.
\end{equation*}
This can be rewritten as
\begin{equation}\label{equa3}
\overline{d_1(x)yd_2(h)+y^*d_2(x^*)d_1(h)}\in Z(R/P)~~\mbox{for all}~~~~x,y\in R.
\end{equation}
Replacing $y$ by $yk$ in (\ref{equa3}) and applying the hypothesis, we arrive at
\begin{equation}\label{equa4}
\overline{d_1(x)yd_2(h)-y^*d_2(x^*)d_1(h)}\in Z(R/P)~~\mbox{for all}~~~~x,y\in R.
\end{equation}
Combining (\ref{equa3}) with (\ref{equa4}), we obtain
\begin{equation*}
2\overline{d_1(x)yd_2(h)}\in Z(R/P)~~\mbox{for all}~~~~x,y\in R.
\end{equation*}
Letting $y=k$ in the above expression and using the primeness of $P$, we conclude that $\overline{d_1(x)}\in Z(R/P)$ for all $x\in R$ or $d_2(h)\in P$ for all $h\in H(R)\cap Z(R)$. In the later case, equation (\ref{equa3}) leads to $\overline{d_2(x)d_1(h)}\in Z(R/P)$ for all $x\in R$. In such a way that $\overline{d_1(x)}\in Z(R/P)$ for all $x\in R$ or $d_2(h)\in P$.\\
If $\overline{d_i(x)}\in Z(R/P)$ for all $x\in R$, where $i\in\{1,2\}$, then $[d_i(x),x]\in P$ for all $x\in R$. According to (\cite{AMT}, Lemma 2.1), we acquire that $d_i(R)\subseteq P$ or $R/P$ is an integral domain.\\
In the sequel we assume that $d_i(h)\in P$ for all $h\in Z(R)\cap H(R)$, where $i\in\{1,2\}$. Invoking arguments used as above, we can find that $d_i(k)\in P$ for all $k\in Z(R)\cap S(R)$. In this case, writing $yk$ for $y$ in (\ref{equa2}), we can show that
\begin{equation}\label{equa5}
\overline{d_1(x)d_2(y)-[x,y]-d_1(y^*)d_2(x^*)+[y^*,x^*]}\in Z(R/P)~~\mbox{for all}~~~~x,y\in R.
\end{equation}
Combining (\ref{equa2}) with (\ref{equa5}), we arrive at
\begin{equation}\label{equa6}
\overline{d_1(x)d_2(y)-[x,y]}\in Z(R/P)~~\mbox{for all}~~~~x,y\in R.
\end{equation}
Taking $y=x$ in (\ref{equa6}), we have
\begin{equation}\label{equa7}
\overline{d_1(x)d_2(x)}\in Z(R/P)\;\;\; \mbox{for all}\;\; x\in R.
\end{equation}
Again, replacing $y$ by $yx$ in (\ref{equa6}), we then get
\begin{equation}\label{equa8}
[d_1(x)yd_2(x),x]\in P\;\;\; \mbox{for all}\;\; x,y\in R.
\end{equation}
Substituting $d_2(x)y$ for $y$ in (\ref{equa8}), we find that
\begin{equation*}
d_1(x)d_2(x)[yd_2(x),x]\in P\;\;\; \mbox{for all}\;\; x,y\in R.
\end{equation*}
Hence, it follows that either $d_1(x)d_2(x)\in P$ or $[yd_2(x),x]\in P$ for all $x,y\in R$. If $(d_1.d_2)(R)\not\subseteq P$, then there exists $x_0\in R$ such that $d_1(x_0)d_2(x_0)\not\in P$. Therefore, for such $x_0$, we have $[yd_2(x_0),x_0]\in P$; in this case, putting $ry$ instead of $y$, we get $[r,x_0]yd_2(x_0)\in P$ for all $r,y\in R$. So that either $\overline{x_0}\in Z(R/P)$ or $d_2(x_0)\in P$. According to the second case, the main equation becomes $\overline{[x,x_0]}\in Z(R/P)$ for all $x\in R$; and thus $\overline{x_0}$ is necessary central in $R/P$.\\
In other words, if $\overline{x_0}\in Z(R/P)$, then in view of the above argument, we conclude that $[R,R]\subseteq P$, which implies that $R/P$ is commutative or $d_1(x_0)d_2(x_0)\in P$. Whence, for all $x\in R$, $d_1(x)d_2(x)\in P$. The linearization of this expression leads to
\begin{equation}\label{equa9}
d_1(x)d_2(y)+d_1(y)d_2(x)\in P\;\;\; \mbox{for all}\;\; x,y\in R.
\end{equation}
Writing $yx$ for $x$ in (\ref{equa9}), we see that
\begin{equation}\label{equa10}
d_1(x)yd_2(x)-d_1(y)[d_2(x),x]\in P\;\;\; \mbox{for all}\;\; x,y\in R.
\end{equation}
Substituting $ry$ for $y$ in (\ref{equa10}), we obtain by comparing this new result with (\ref{equa10}) that
\begin{equation}\label{equa22}
[r,d_1(x)]yd_2(x) + d_1(r)y[d_2(x),x]\in P\;\;\; \mbox{for all}\;\; r,x,y\in R.
\end{equation}
In particular, setting $r=d_1(x)$ in (\ref{equa22}), we obviously get
\begin{equation}\label{equa23}
d_1^2(x)y[d_2(x),x]\in P\;\;\; \mbox{for all}\;\; x,y\in R.
\end{equation}
From (\ref{equa23}), it follows that for each $x\in R$, $d_1^2(x)\in P$ or $[d_2(x),x]\in P$. Now fix $x\in R$ and consider the identity $d_1^2(x)\in P$. By (\ref{equa6}) we must show that $\overline{[d_1(x),y]}\in Z(R/P)$ which yields $\overline{d_1(x)}\in Z(R/P)$. Thus, since $d_1(x)d_2(x)\in P$, we find that $d_1(x)\in P$ or $d_2(x)\in P$. Consequently, in both cases we have $[d_2(x),x]\in P$ for all $x\in R$. Therefore, linearizing the last expression, we obviously get
\begin{equation*}
[d_2(x),y]+[d_2(y),x]\in P\;\;\; \mbox{for all}\;\; x,y\in R.
\end{equation*}
Writing $yx$ for $y$ in the above relation, then the result obtained gives that
\begin{equation}
[y,x]Rd_2(x)\subseteq P\;\;\; \mbox{for all}\;\; x,y\in R.
\end{equation}
Once again applying the primeness of $P$, it follows that $[y,x]\in P$ for all $x,y\in R$ or $d_2(R)\subseteq P$. Returning to the hypothesis, we find that $\overline{[R,R]}\subseteq Z(R/P)$. Clearly, we conclude that $R/P$ is a commutative integral domain. \\
$(2) \implies (3)$ Assume that
\begin{equation}\label{eeqq1}
\overline{d_1(x)d_2(x^*)-x\circ x^*}\in Z(R/P)~~\mbox{for all}~~~~x\in R.
\end{equation}
A linearization of (\ref{eeqq1}) gives
\begin{equation*}
\overline{d_1(x)d_2(y^*)+d_1(y)d_2(x^*)-x\circ y^*-y\circ x^*}\in Z(R/P)~~\mbox{for all}~~~~x,y\in R.
\end{equation*}
This can be rewritten as
\begin{equation}\label{eeqq2}
\overline{d_1(x)d_2(y)-x\circ y+d_1(y^*)d_2(x^*)-y^*\circ x^*}\in Z(R/P)~~\mbox{for all}~~~~x,y\in R.
\end{equation}
Substituting $yh$ for $y$ in (\ref{eeqq2}), where $h\in Z(R)\cap H(R)\setminus \{0\}$, we obtain
\begin{equation}\label{eeqq3}
\overline{d_1(x)yd_2(h)+d_1(h)y^*d_2(x^*)}\in Z(R/P)~~\mbox{for all}~~~~x,y\in R.
\end{equation}
Replacing $y$ by $yk$ in (\ref{eeqq3}), where $k\in Z(R)\cap S(R)$, we find that
\begin{equation}\label{eeqq4}
\overline{d_1(x)yd_2(h)-d_1(h)y^*d_2(x^*)}\in Z(R/P)~~\mbox{for all}~~~~x,y\in R.
\end{equation}
Invoking (\ref{eeqq3}) together with (\ref{eeqq4}), one can easily see that
\begin{equation}\label{eeqq5}
2\overline{d_1(x)yd_2(h)}\in Z(R/P)~~\mbox{for all}~~~~x,y\in R.
\end{equation}
Writing $k$ instead of $y$ in (\ref{eeqq5}), where $k\in Z(R)\cap S(R)$, and using the $2$-torsion freeness hypothesis, we arrive at
\begin{equation*}
\overline{d_1(x)d_2(h)}\in Z(R/P)~~\mbox{for all}~~~~x\in R.
\end{equation*}
Therefore, we get $\overline{d_1(R)}\subseteq Z(R/P)$ or $d_2(h)\in P$ for all $h\in Z(R)\cap H(R)$. In the second case, relation (\ref{eeqq4}) reduces to $\overline{d_1(h)d_2(x)}\in Z(R/P)$. Thus, we have $d_2(R)\subseteq Z(R/P)$ or $d_1(h)\in P$ for all $h\in Z(R)\cap H(R)$.\\
Analogously, if $d_1(Z(R)\cap S(R))\subseteq P$ and $d_2(Z(R)\cap S(R))\subseteq P$, then putting $k$ instead of $y$ in (\ref{eeqq2}), where $k\in Z(R)\cap S(R)$, we find that
\begin{equation}\label{eeqq6}
\overline{-x+x^*}\in Z(R/P)~~\mbox{for all}~~~~x\in R.
\end{equation}
Again, writing $k$ for $y$ in (\ref{eeqq6}), where $k\in Z(R)\cap S(R)$, one can obviously get
\begin{equation}\label{eeqq7}
\overline{-x-x^*}\in Z(R/P)~~\mbox{for all}~~~~x\in R.
\end{equation}
Combining (\ref{eeqq6}) with (\ref{eeqq7}), we conclude that $\overline{x}\in Z(R/P)$ for all $x\in R$ and this assures that $R/P$ is an integral domain.\\
Now if $\overline{d_1(R)}\subseteq Z(R/P)$ or $\overline{d_2(R)}\subseteq Z(R/P)$, then by adopting a similar approach of the one used  in the preceding Theorem, we get the required result.
\end{proof}

\begin{cor}\label{cor9}
Let $(R,\ast)$ be a ring with involution of $P$-second kind where $P$ is a prime ideal of $R$ such that $char(R/P)\neq 2$. If $R$ admits a derivation $d$ of $R$, then the following assertions are equivalent:\\
{\rm (1)} $\overline{d(x)d(x^*)\pm [x,x^*]}\in Z(R/P)$ for all $x\in R$;\\
{\rm (2)} $\overline{d(x)d(x^*)\pm x\circ x^*}\in Z(R/P)$ for all $x\in R$;\\
{\rm (3)} $R/P$ is an integral domain.
\end{cor}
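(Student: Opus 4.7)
My plan is to deduce Corollary \ref{cor9} directly from Theorem \ref{thm4} by a judicious choice of the pair $(d_1,d_2)$ of derivations. For the implication $(1)\Longrightarrow (3)$ with the minus sign, I would simply set $d_1=d_2=d$ in Theorem \ref{thm4}(1); the hypothesis $\overline{d(x)d(x^*)-[x,x^*]}\in Z(R/P)$ then matches exactly, and the conclusion of Theorem \ref{thm4} gives that $R/P$ is an integral domain. For the plus sign, I would exploit the fact that whenever $d$ is a derivation of $R$ so is $-d$, and set $d_1=-d$, $d_2=d$. Then
\[
\overline{d_1(x)d_2(x^*)-[x,x^*]}=-\overline{d(x)d(x^*)+[x,x^*]},
\]
which lies in $Z(R/P)$ under the hypothesis of $(1)$ because $Z(R/P)$ is closed under negation. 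Applying Theorem \ref{thm4}(1) once more yields $(3)$.

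The implication $(2)\Longrightarrow (3)$ is dealt with in the very same manner, replacing $[x,x^*]$ by the anti-commutator $x\circ x^*$ and invoking Theorem \ref{thm4}(2) in place of Theorem \ref{thm4}(1); for the plus sign one again replaces $d_1=-d$ and $d_2=d$, and for the minus sign one takes $d_1=d_2=d$.

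For the converses $(3)\Longrightarrow (1)$ and $(3)\Longrightarrow (2)$, the argument is immediate: if $R/P$ is an integral domain, then in particular it is commutative, so $[R,R]\subseteq P$ and $x\circ x^*\equiv 2xx^*\pmod{P}$. Furthermore, $Z(R/P)=R/P$ in the commutative case, so every element, and in particular $\overline{d(x)d(x^*)\pm [x,x^*]}$ and $\overline{d(x)d(x^*)\pm x\circ x^*}$, lies in $Z(R/P)$ trivially.

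Accordingly, there is no genuine mathematical obstacle beyond what has already been overcome in Theorem \ref{thm4}; the only minor concern is the uniform handling of the $\pm$ signs, which is resolved once and for all by the observation that the assignment $d\mapsto -d$ preserves the class of derivations on $R$.
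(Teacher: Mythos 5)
Your proposal is correct and follows exactly the route the paper intends: Corollary \ref{cor9} is stated as an immediate consequence of Theorem \ref{thm4}, obtained by specializing $d_1=d_2=d$ for the minus sign and $d_1=-d$, $d_2=d$ for the plus sign (noting that $-d$ is again a derivation and that $Z(R/P)$ is closed under negation), with the converse trivial since an integral domain $R/P$ satisfies $Z(R/P)=R/P$. Nothing further is needed.
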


\noindent
As an application of Theorem \ref{thm4}, we get some commutativity criteria for prime rings.

\begin{cor}\label{cor10}
Let $R$ be a 2-torsion free prime ring with involution $*$ of the second kind. If $R$ admits two derivations $d_1$ and $d_2$, then the following assertions are equivalent:\\
{\rm (1)} $d_1(x)d_2(x^*)\pm [x,x^*]\in Z(R)$ for all $x\in R$;\\
{\rm (2)} $d_{1}(x)d_{2}(x^*)\pm x\circ x^*\in Z(R)$ for all $x\in R$;\\
{\rm (3)} $R$ is an integral domain.
\end{cor}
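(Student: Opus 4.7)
The plan is to derive the corollary as a specialization of Theorem \ref{thm4} at the prime ideal $P=(0)$, with two small adjustments to handle the $\pm$ signs and the converse implication.

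First, I would check that the hypotheses of Theorem \ref{thm4} transfer cleanly to $P=(0)$: since $R$ is prime, $(0)$ is a prime ideal; under the identification $R/(0)\cong R$, the $2$-torsion freeness of $R$ becomes $\mathrm{char}(R/(0))\neq 2$; and the involution being of the second kind, i.e.\ $Z(R)\cap S(R)\neq\{0\}$, is precisely the statement that $*$ is of $(0)$-second kind. Since $Z(R/(0))=Z(R)$, the conditions in (1) and (2) of the corollary with the $-$ sign coincide literally with the conditions (1) and (2) of Theorem \ref{thm4}, so that theorem immediately delivers that $R=R/(0)$ is an integral domain in those cases.

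Next, I would reduce the $+$ sign case to the $-$ sign case by replacing $d_2$ with the derivation $-d_2$ (negations of derivations are still derivations). If $d_1(x)d_2(x^*)+[x,x^*]\in Z(R)$, then setting $D_2:=-d_2$ we get $d_1(x)D_2(x^*)-[x,x^*]=-\bigl(d_1(x)d_2(x^*)+[x,x^*]\bigr)\in Z(R)$, which is exactly hypothesis (1) of Theorem \ref{thm4} applied to the pair $(d_1,D_2)$. The same substitution handles (2) for the anti-commutator version. Hence (1)$\Rightarrow$(3) and (2)$\Rightarrow$(3) for both sign choices.

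Finally, (3)$\Rightarrow$(1) and (3)$\Rightarrow$(2) are immediate: an integral domain is commutative, so $Z(R)=R$, which forces every element of $R$ -- in particular each of $d_1(x)d_2(x^*)\pm[x,x^*]$ and $d_1(x)d_2(x^*)\pm x\circ x^*$ -- to lie in $Z(R)$ trivially. I do not anticipate any genuine obstacle; the corollary is truly a corollary, with the only point requiring a moment's thought being the sign reduction $d_2\mapsto-d_2$, which preserves the derivation property and swaps the sign of the product $d_1(x)d_2(x^*)$ without altering its central membership.
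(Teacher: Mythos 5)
Your proposal is correct and matches the paper's intent exactly: Corollary \ref{cor10} is presented there without proof as an immediate application of Theorem \ref{thm4} with $P=(0)$, which is precisely your specialization. The sign reduction via $d_2\mapsto -d_2$ and the trivial converse $(3)\Rightarrow(1),(2)$ are exactly the (unstated) details the paper leaves to the reader, and you have supplied them correctly.
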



\begin{thm}\label{thm5}
Let $(R,\ast)$ be a ring with involution of $P$-second kind where $P$ is a prime ideal of $R$ such that $char(R/P)\neq 2$. If $R$ admits two derivations $d_1$ and $d_2,$ then the following assertions are equivalent:\\
{\rm (1)} $\overline{d_1(x^*x)-d_2(xx^*)-xx^*}\in Z(R/P)$ for all $x\in R$;\\
{\rm (2)} $R/P$ is an integral domain.
\end{thm}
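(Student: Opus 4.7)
The direction $(2)\Rightarrow(1)$ is immediate, since commutativity of $R/P$ places every element in $Z(R/P)$. For $(1)\Rightarrow(2)$, by Lemma \ref{lem} it suffices to show $[x,x^*]\in P$ for all $x\in R$, and the plan mirrors the scheme of Theorems \ref{thm1}--\ref{thm4}: linearize, substitute central hermitian and central skew-hermitian elements, then exploit primeness of $P$.

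Linearizing the identity and substituting $y\mapsto yh$ with $h\in Z(R)\cap H(R)$ eliminates the derivation-free terms and yields
$$\overline{(x^*y+y^*x)\,d_1(h)-(xy^*+yx^*)\,d_2(h)}\in Z(R/P).\qquad(\star)$$
Taking $y=k\in Z(R)\cap S(R)\setminus P$ in $(\star)$ collapses both parentheses to $k(x^*-x)$, giving $\overline{k(x^*-x)(d_1-d_2)(h)}\in Z(R/P)$. Since $k$ and $(d_1-d_2)(h)$ both lie in $Z(R)$, commuting with an arbitrary $r$ and invoking primeness produces the dichotomy: either $\overline{x^*-x}\in Z(R/P)$, whence $r=x^*$ gives $[x,x^*]\in P$ and we are done; or $(d_1-d_2)(h)\in P$ for every $h\in Z(R)\cap H(R)$. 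The usual $h=k^2$ computation together with $\mathrm{char}(R/P)\neq 2$ extends this to $\delta:=d_1-d_2$ annihilating $Z(R)$ modulo $P$.

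In this residual case, substituting $y=k$ and then $y=h'$ (with $h'\in Z(R)\cap H(R)\setminus P$) into the linearized identity and using $\delta(Z(R))\subseteq P$ produce, via the same primeness trick, $\overline{\delta(x^*-x)-(x^*-x)}$ and $\overline{\delta(x+x^*)-(x+x^*)}$ in $Z(R/P)$; adding and dividing by $2$ give $\overline{\delta(x)-x}\in Z(R/P)$ for every $x\in R$. Setting $\phi=\delta-\mathrm{id}$, the twisted Leibniz rule $\phi(xy)=\phi(x)y+x\phi(y)+xy$ (an immediate consequence of $\delta$ being a derivation), combined with $\overline{\phi(R)}\subseteq Z(R/P)$, yields after commutation $\bar\delta(\bar x)[\bar y,\bar z]+[\bar x,\bar z]\bar\delta(\bar y)=0$ in $\bar R:=R/P$. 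Taking $\bar z=\bar x$ and replacing $\bar y$ by $\bar r\bar y$ produces $\bar\delta(\bar x)\,\bar R\,[\bar y,\bar x]=0$; primeness of $\bar R$ therefore forces, for each $\bar x$, either $\bar\delta(\bar x)=0$ or $\bar x\in Z(\bar R)$, and Brauer's trick yields $R/P$ commutative (done) or $\delta(R)\subseteq P$.

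In the final sub-case $d_1\equiv d_2\pmod P$, so the original identity collapses to $\overline{d_1([x^*,x])-xx^*}\in Z(R/P)$; adding its $x\mapsto x^*$ version delivers $\overline{x\circ x^*}\in Z(R/P)$. Linearizing and applying the $y\mapsto yk$ trick upgrades this to $\overline{x\circ y}\in Z(R/P)$ for all $x,y\in R$, so $\bar x^2\in Z(\bar R)$ for every $\bar x$. Substituting $\bar w=\bar x^2$ into the linearization of $\bar x\circ[\bar x,\bar z]=0$ yields $\bar x^2[\bar x,\bar z]=0$, and since nonzero central elements of the prime ring $\bar R$ are non-zero-divisors, each $\bar x$ satisfies $\bar x^2=0$ or $\bar x\in Z(\bar R)$. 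A short argument using the translation $\bar y\mapsto \bar y+\bar k$ with the nonzero central element $\bar k$ (provided by the $P$-second kind hypothesis) excludes the nilpotent alternative, forcing $\bar R=Z(\bar R)$. The chief obstacle is marshalling this proliferation of sub-cases; unlike Theorems \ref{thm1}--\ref{thm4}, the present identity admits no alternative conclusion $d_i(R)\subseteq P$, so every branch must ultimately land at $[x,x^*]\in P$.
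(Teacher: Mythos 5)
Your proposal is correct and follows the same overall skeleton as the paper's proof (linearize, substitute $yh$ with $h\in Z(R)\cap H(R)$ and then $yk$ with $k\in Z(R)\cap S(R)\setminus P$, reduce to the difference $\delta=d_1-d_2$ killing the central hermitian and skew-hermitian elements modulo $P$, derive $\overline{\delta(x)-x}\in Z(R/P)$, and split into the cases $\delta(R)\subseteq P$ versus $R/P$ commutative). It diverges in two worthwhile places. First, where the paper invokes the Posner-type result (\cite{AMT}, Lemma 2.1) to pass from $[\delta(x),x]\in P$ to the dichotomy, you give a self-contained argument via the twisted Leibniz rule for $\phi=\delta-\mathrm{id}$, landing on $\bar\delta(\bar x)\bar R[\bar y,\bar x]=0$ and Brauer's trick; this makes the proof independent of that citation at the cost of a page of computation. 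Second, in the terminal sub-case $\delta(R)\subseteq P$ the paper claims that subtracting the $x\mapsto x^*$ version of (\ref{eqt8}) yields $\overline{[x,x^*]}\in Z(R/P)$, but as written that subtraction actually produces $\overline{2d_2[x,x^*]-[x,x^*]}\in Z(R/P)$, so the paper's step has a sign slip; your choice to \emph{add} the two versions, obtaining $\overline{x\circ x^*}\in Z(R/P)$ and then finishing with the translation $x\mapsto x+k$ by a nonzero central skew element, cleanly sidesteps this and reaches commutativity without needing Lemma \ref{lem}. Two cosmetic points: what you actually establish is $\delta(Z(R)\cap H(R))\subseteq P$ and $\delta(Z(R)\cap S(R))\subseteq P$ rather than $\delta(Z(R))\subseteq P$ (only the former is used, so no harm), and the sum of your two displays gives $\overline{\delta(x^*)-x^*}\in Z(R/P)$, which needs the surjectivity of $*$ to be restated as $\overline{\delta(x)-x}\in Z(R/P)$.
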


\begin{proof}
From the hypothesis, we have
\begin{equation}\label{eqt1}
\overline{d_1(x^*x)-d_2(xx^*)-xx^*}\in Z(R/P)~~\mbox{for all}~~~~x\in R.
\end{equation}
Linearizing (\ref{eqt1}), we get
\begin{equation}\label{eqt2}
\overline{d_1(xy)-d_2(yx)-yx+d_1(y^*x^*)-d_2(x^*y^*)-x^*y^*}\in Z(R/P)~~\mbox{for all}~~~~x,y\in R.
\end{equation}
Replacing $y$ by $yh$ in (\ref{eqt2}), where $h\in H(R)\cap Z(R)$, we find that
\begin{equation}\label{eqt3}
\overline{(xy+y^*x^*)d_1(h)-(yx+x^*y^*)d_2(h)}\in Z(R/P)~~\mbox{for all}~~~~x,y\in R.
\end{equation}
Substituting $yk$ for $y$ in (\ref{eqt3}), where $k\in S(R)\cap Z(R)\setminus\{0\}$, we obtain
\begin{equation}\label{eqt4}
\overline{(xy-y^*x^*)d_1(h)-(yx-x^*y^*)d_2(h)}\in Z(R/P)~~\mbox{for all}~~~~x,y\in R.
\end{equation}
Combining the two last relations, we arrive at
\begin{equation}\label{eqt5}
\overline{xyd_1(h)-yxd_2(h)}\in Z(R/P)~~\mbox{for all}~~~~x,y\in R.
\end{equation}
As a special case of (\ref{eqt5}), when we put $y=k$, we may write
\begin{equation}\label{eqt6}
\overline{x(d_1(h)-d_2(h))}\in Z(R/P)~~\mbox{for all}~~~~x,y\in R.
\end{equation}
Replacing $h$ by $k^2$ in (\ref{eqt3}) and using the primeness of $P$, we can see that
\begin{equation*}
\overline{x}\in Z(R/P)~~\mbox{for all}~~x\in R\quad \mbox{or}\quad d_1(k)-d_2(k)\in P.
\end{equation*}
By view of (\ref{eqt5}), it results
\begin{equation*}
\overline{[x,y]d_2(k)}\in Z(R/P)~~\mbox{for all}~~~~x,y\in R.
\end{equation*}
Therefore, $d_2(Z(R)\cap S(R))\subset P$ or $R/P$ is commutative. Arguing as above, we find that $d_1(Z(R)\cap S(R))\subset P$. Now let $y=k$ in (\ref{eqt2}), it follows that
\begin{equation}\label{eqt7}
\overline{d_1(x)-d_2(x)-x}\in Z(R/P)~~\mbox{for all}~~~~x\in R.
\end{equation}
In such a way, (\ref{eqt7}) leads to $[D(x),x]\in P$ for all $x\in R$. where $D:x\mapsto d_1(x)-d_2(x)$ is a derivation of $R$. According to (\cite{AMT}, Lemma 2.1), we arrive at $D(R)\subseteq P$ or $R/P$ is an integral domain.\\
If $D(R)\subseteq P$, then our assumption becomes
\begin{equation}\label{eqt8}
\overline{d_2[x,x^*]-xx^*}\in Z(R/P)~~\mbox{for all}~~~~x\in R.
\end{equation}
Writing $x^*$ for $x$ in (\ref{eqt8}) and subtracting it from the above relation, we conclude that
\begin{equation*}
\overline{[x,x^*]}\in Z(R/P)~~\mbox{for all}~~~~x\in R.
\end{equation*}
A Linearization of this relation, yields
\begin{equation}\label{eqt9}
\overline{[x,y^*]+[y,x^*]}\in Z(R/P)~~\mbox{for all}~~~~x,y\in R.
\end{equation}
By an appropriate expansion
\begin{equation}\label{eqt10}
\overline{-[x,y^*]+[y,x^*]}\in Z(R/P)~~\mbox{for all}~~~~x,y\in R.
\end{equation}
Combining (\ref{eqt9}) with (\ref{eqt10}), we arrive at
\begin{equation}\label{eqt11}
\overline{[y,x^*]}\in Z(R/P)~~\mbox{for all}~~~~x,y\in R.
\end{equation}
Putting $yx$ for $x$ in (\ref{eqt11}), one can verify that
\begin{equation}\label{eqt12}
[x,x^*]\in P~~\mbox{for all}~~~~x\in R.
\end{equation}
Applying Lemma \ref{lem}, we get the required result.
\end{proof}

\begin{cor}\label{cor11}
Let $(R,\ast)$ be a ring with involution of $P$-second kind where $P$ is a prime ideal of $R$ such that $char(R/P)\neq 2$. If $R$ admits a derivation $d$, then the following assertions are equivalent:\\
{\rm (1)} $\overline{d[x,x^*]-xx^*}\in Z(R/P)$ for all $x\in R$;\\
{\rm (2)} $\overline{d(x\circ x^*)-xx^*}\in Z(R)$ for all $x\in R$;\\
{\rm (2)} $R/P$ is an integral domain.
\end{cor}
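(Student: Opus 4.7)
The plan is to derive Corollary~\ref{cor11} directly from Theorem~\ref{thm5} by appropriately choosing the pair of derivations $(d_1, d_2)$ so that its master identity $\overline{d_1(x^*x) - d_2(xx^*) - xx^*} \in Z(R/P)$ reproduces each of the two identities appearing in the corollary. The implications (3) $\Rightarrow$ (1) and (3) $\Rightarrow$ (2) are immediate: an integral domain $R/P$ is commutative, so $Z(R/P) = R/P$ and both claimed memberships are automatic.

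For (1) $\Rightarrow$ (3), expanding the commutator yields
\begin{equation*}
d[x,x^*] - xx^* = d(xx^*) - d(x^*x) - xx^*.
\end{equation*}
Taking $d_1 := -d$ and $d_2 := -d$ (both genuine derivations of $R$) in Theorem~\ref{thm5} turns its master expression into $-d(x^*x) + d(xx^*) - xx^*$, which is exactly $d[x,x^*] - xx^*$. Hence hypothesis (1) is an instance of the hypothesis of Theorem~\ref{thm5}, and the theorem delivers the desired conclusion.

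For (2) $\Rightarrow$ (3), expanding the anti-commutator gives $d(x\circ x^*) - xx^* = d(xx^*) + d(x^*x) - xx^*$. The choice $d_1 := d$ and $d_2 := -d$ in Theorem~\ref{thm5} turns the master expression into $d(x^*x) + d(xx^*) - xx^*$, matching hypothesis (2); the theorem then again forces $R/P$ to be an integral domain. No serious technical obstacle is anticipated: Theorem~\ref{thm5} has already carried out the delicate algebra (iterated linearizations, substitutions by central hermitian and skew-hermitian elements, the $2$-torsion freeness step, and the final appeal to Lemma~\ref{lem}), so the entire content of the corollary reduces to the sign-bookkeeping observation that both of its identities fit the master hypothesis of that theorem under a judicious sign convention on $(d_1, d_2)$.
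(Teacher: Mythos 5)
Your proposal is correct and matches the paper's (implicit) intent: the corollary is stated as an immediate consequence of Theorem~\ref{thm5}, obtained exactly by specializing the pair $(d_1,d_2)$ to $(-d,-d)$ and $(d,-d)$, and your sign-bookkeeping checks out in both cases. The only discrepancies are typographical and lie in the paper's statement itself ($Z(R)$ should read $Z(R/P)$ in the second item, and two items are labeled (2)), not in your argument.
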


\begin{cor}\label{cor12}
Let $R$ be a 2-torsion free prime ring with involution $*$ of the second kind. If $R$ admits two derivations $d_1$ and $d_2$, then the following assertions are equivalent:\\
{\rm (1)} $d_1(x^*x)-d_2(xx^*)-xx^*\in Z(R)$ for all $x\in R$;\\
{\rm (2)} $R$ is an integral domain.
\end{cor}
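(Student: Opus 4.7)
The plan is to obtain Corollary \ref{cor12} as a direct specialization of Theorem \ref{thm5} to the case where the prime ideal is the zero ideal. Since $R$ is a prime ring, $P=(0)$ is a prime ideal of $R$, and the quotient $R/P$ is canonically identified with $R$; under this identification $Z(R/P)=Z(R)$ and the overline operation is the identity map.

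Next I would verify the hypotheses of Theorem \ref{thm5} in this setting. The $2$-torsion freeness of $R$ yields $\mathrm{char}(R/P)=\mathrm{char}(R)\neq 2$. The assumption that $*$ is of the second kind, namely $Z(R)\cap S(R)\neq\{0\}$, is precisely the condition $Z(R)\cap S(R)\nsubseteq P$ when $P=(0)$, so $*$ is of $P$-second kind in the sense of Definition 1 (this agrees with Remark 1(1)). The hypothesis $d_1(x^*x)-d_2(xx^*)-xx^*\in Z(R)$ for all $x\in R$ then transcribes verbatim to condition (1) of Theorem \ref{thm5}, because passing to $R/(0)=R$ does nothing. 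Applying Theorem \ref{thm5} concludes that $R/P=R$ is an integral domain, giving the implication (1)$\Rightarrow$(2).

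For the converse (2)$\Rightarrow$(1), if $R$ is an integral domain then $R$ is commutative, hence $Z(R)=R$, and so every element of $R$ (in particular $d_1(x^*x)-d_2(xx^*)-xx^*$) lies in $Z(R)$ automatically; the identity in (1) is therefore trivially satisfied for every $x\in R$.

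No real obstacle is expected: the whole force of the corollary is already packaged inside Theorem \ref{thm5}, and the only minor point to check is that the paper's ``$P$-second kind'' notion degenerates to the classical second kind condition when $P=(0)$. This is exactly the pattern used by the authors to extract Corollaries \ref{cor2}, \ref{cor5}, \ref{cor8} and \ref{cor10} from the corresponding factor-ring theorems, so the same template applies here.
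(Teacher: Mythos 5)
Your proposal is correct and matches the paper's (implicit) argument: Corollary \ref{cor12} is obtained exactly by specializing Theorem \ref{thm5} to $P=(0)$, noting that primeness makes $(0)$ a prime ideal, $2$-torsion freeness gives $char(R/P)\neq 2$, and the second-kind condition $Z(R)\cap S(R)\neq\{0\}$ is precisely $(0)$-second kind. The converse direction via commutativity of an integral domain is the same trivial observation the authors rely on for the other prime-ring corollaries.
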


\noindent The following example shows that the "\verb"primeness"" hypothesis in Theorem \ref{thm3} is not superfluous.\\

\noindent \textbf{Example 1.} Consider the ring $\mathcal{R} = M_2(\mathbb{C})\times \mathbb{Z}[X]$ and $\mathcal{P} = 0\times P$, where $P$ any proper ideal of $\mathbb{Z}[X]$. Let $\sigma$ be the involution of $P$-second kind defined on $\mathcal{R}$ by $$\left(\begin{pmatrix}
a & b\\
c & d
\end{pmatrix},P(X)\right)^\sigma=\left(\begin{pmatrix}
\overline{a} & \overline{c}\\
\overline{b} & \overline{d}
\end{pmatrix},P(X)\right).$$
If we set $d_1(M,P(X)) = \left(\begin{pmatrix}
0 & -b\\
-c & 0
\end{pmatrix},0\right)$ and $d_2(M,P(X))=(0,P'(X)),$ then an easy computation  leads to $\overline{d_1(x)d_2(x^*)}\in Z(\mathcal{R}/\mathcal{P})$ for all $x\in \mathcal{R}$; but none of the conclusions of Theorem \ref{thm3} is satisfied.
\\

\noindent
{\bf \bfseries Acknowledgment. }\\
The authors are deeply indebted to the learned referees for their careful reading of the manuscript and constructive
comments.\\

\noindent
{\bf \bfseries Declaration. }\\
There is no funding source.\\

\noindent
{\bf \bfseries Conflicts of Interest. }\\
The authors declare no conflicts of interest.\\


\end{document}